\documentclass[a4paper]{amsart}
\newcommand{\incom}[1]{{\color{blue} { #1}}}
\usepackage[utf8]{inputenc}
\usepackage[pdfusetitle]{hyperref}

\usepackage{amsmath,amsthm,stackengine,scalerel}
\usepackage{amssymb}
\usepackage{mathrsfs}
\usepackage{mathtools}
\usepackage{tikz-cd}
\usepackage{tikz}
\usetikzlibrary{arrows,calc,decorations.pathmorphing,shapes,fit}
\usepackage{enumitem}
\usepackage[capitalise]{cleveref}
\usepackage{csquotes}
\usepackage{fullpage}
\usepackage{multicol}
\usepackage{todonotes}
\newcommand{\margatodo}{\todo[color=pink]}
\newcommand{\margatodoinline}{\todo[inline,color=pink,caption=...]}

\usepackage{faktor}

\newtheoremstyle{mystyle}
{}
{}
{\normalfont}
{}
{\bfseries}
{}
{0.5 em}
{\thmname{#1}\thmnumber{ #2}\normalfont\thmnote{ (#3)}\bfseries.}

\newtheoremstyle{mystyle2}
{}
{}
{\itshape}
{}
{\bfseries}
{}
{0.5 em}
{\thmname{#1}\thmnumber{ #2}\normalfont\thmnote{ (#3)}\bfseries.}

\theoremstyle{mystyle2}
\newtheorem{thm}{Theorem}[section]
\newtheorem*{thm*}{Theorem}
\crefname{thm}{Theorem}{Theorems}
\newtheorem{prop}[thm]{Proposition}
\crefname{prop}{Proposition}{Propositions}
\newtheorem{lem}[thm]{Lemma}
\newtheorem*{lem*}{Lemma}
\newtheorem{kor}[thm]{Corollary}
\newtheorem*{kor*}{Corollary}

\theoremstyle{mystyle}
\newtheorem{defi}[thm]{Definition}
\newtheorem*{defi*}{Definition}
\crefname{defi}{Definition}{Definitions}
\newtheorem{examp}[thm]{Example}
\newtheorem*{examp*}{Example}
\newtheorem{rem}[thm]{Remark}

\newtheorem{fact}[thm]{Fact}

\DeclareMathOperator{\chara}{char}

\DeclareMathOperator{\res}{res}

\renewcommand{\vec}[1]{\overline{#1}}

\newenvironment{claim}[1]{\par\noindent\underline{Claim:}\space#1}{}

\newcommand{\Z}{\mathbb{Z}}
\newcommand{\N}{\mathbb{N}}
\newcommand{\F}{\mathbb{F}}
\newcommand{\Q}{\mathbb{Q}}
\newcommand{\R}{\mathbb{R}}

\let\polishL\L
\newcommand{\Losthm}{\polishL{}o\'s's theorem}

\DeclareMathOperator{\Th}{Th}

\newcommand{\alg}{^\mathrm{alg}}

\newcommand{\U}{\mathcal{U}}

\renewcommand{\L}{\mathcal{L}}
\newcommand{\Lring}{\L_{\mathrm{ring}}}

\newcommand{\Lval}{\L_{\mathrm{val}}}

\renewcommand{\O}{\mathcal{O}}
\newcommand{\m}{\mathfrak{m}}

\newcommand{\barv}{\overline{v}}

\newcommand{\x}{^\times}
\usepackage{xcolor}
\definecolor{sage}{RGB}{107, 200, 17}
\usepackage{verbatim}

\title[Definability via the tilting correspondence]{Arithmetic Definability of Henselian Valuations:\\
Definability via the tilting correspondence}
\author[G.~Alecci]{Gessica Alecci}
\address{Department of Mathematical Sciences
``Giuseppe Luigi Lagrange'', Politecnico di Torino, Italy. \newline
\url{gessica.alecci@polito.it}
}
\author[I.~Hadeg]{Ihsane Hadeg} 
\address{Ruhr-Universit\"at Bochum,
Fakult\"at f\"ur Mathematik,
Universit\"atsstra\ss e 150,
44801 Bochum, Germany.
\newline \url{ihsane.hadeg@ruhr-uni-bochum.de}}

\author[F.~Jahnke]{Franziska Jahnke}
\address{Institute for Mathematical Logic and Foundations, Department of Mathematics and Computer Science,
University of M\"unster,
Einsteinstraße 62,
48149 M\"unster, Germany.\newline
\url{franziska.jahnke@uni-muenster.de}}
\author[M.~Ketelsen]{Margarete Ketelsen} 
\address{Institute for Mathematical Logic and Foundations, Department of Mathematics and Computer Science,
University of M\"unster,
Einsteinstraße 62,
48149 M\"unster, Germany.\newline
\url{margarete.ketelsen@uni-muenster.de}}
\author[I.~Negrini]{Isabella Negrini}
\address{University of Toronto, 40 St. George Street, Room 6290,
Toronto, ON M5S 2E4,
Canada\newline
\url{isabella.negrini@mail.mcgill.ca}}

\begin{document}
\begin{abstract}
    We show that arithmetic definability of henselian valuations is preserved by the tilting correspondence. 
    Moreover, we show that if a perfectoid valuation is arithmetically definable, then
    no parameters are needed.
    We also investigate whether these
    definitions can be chosen uniformly, and discuss the required quantifier complexity.
\end{abstract}
\maketitle

\section{Introduction}
The tilting equivalence is a fundamental tool in arithmetic geometry which allows 
one to ``transfer arithmetic'' between certain fields of mixed characteristic
and their positive characteristic analoga.
More precisely, tilting 
transfers arithmetical data between a perfectoid field of mixed
characteristic and its tilt, an associated perfectoid field of positive characteristic, and vice versa.
Instances of this transfer principle are the Fontaine-Wintenberger Theorem which asserts 
that the absolute Galois group of
a perfectoid field and its tilt are isomorphic as profinite groups (\cite{FontWint}), Scholze's extension of the
tilting equivalence to perfectoid spaces which led to his proof of 
(many cases of) the
weight-monodromy conjecture
in mixed characteristic by reduction to positive characteristic (\cite{scholze2012perfectoid}), 
as well as Andr\'e's proof of the direct summand conjecture which also crucially
uses
perfectoid spaces (\cite{andre}).

These results give rise to the model-theoretic question how much of
the first-order structure of perfectoid fields (in the language of rings or in the language of valued fields, see \cref{sec:mt} for definitions) is preserved by the tilting equivalence. We now give a rough overview of what is known
in this respect so far:
Kartas shows in \cite{kartas2024decidability} that as long as
$K$ is a computable untilt of $K^\flat$ (in an appropriate sense defined in \cite{kartas2024decidability}), 
then decidability of $\textrm{Th}_{\Lval}(K^\flat, v^\flat)$ implies 
decidability of $\textrm{Th}_{\Lval}(K,v)$.
Jahnke and Kartas show in \cite{jahnke-kartas2023beyond} that tilting preserves
elementary equivalence, as well as a version for untilting 
(which requires working over a base field). Moreover, they also obtain a converse to Kartas' results above, showing that decidability of $\textrm{Th}_{\Lval}(K, v)$ 
implies 
decidability of $\textrm{Th}_{\Lval}(K^\flat,v^\flat)$.
A key result of \cite{jahnke-kartas2023beyond} is that -- up to elementary equivalence --
any nonpricipal ultrapower $(K^\mathcal{U}, v^\mathcal{U})$ 
of a perfectoid field $(K,v)$ admits
a coarsening $w$ of $v^\mathcal{U}$ such that $(K^\mathcal{U},w)$ is tame
and such that
$(K^\flat,v^\flat)$ embeds elementarily into the residue field of $(K^\mathcal{U},w)$.
This implies immediately that being $C_i$ in the sense
of Lang (which is an $\Lring$-elementary property) 
 tilts. The converse of this was investigated
by Kartas in \cite{kartas2026perfectoidcitransfer}, but is still 
open in full generality.
In \cite{gambardella2026transferprincipleskatokuzumakiconjecture},
Gambardella and Kartas show that having the strong $C_1^0$-property untilts.
When perfectoid fields are considered in continuous logic (rather than in 
discrete first-order logic),
Rideau-Kikuchi, Scanlon and Simon (\cite{rideaukikuchi2025tiltingequivalencebiinterpretation}) show that the tilting equivalence can be seen
as a bi-interpretation in continuous logic.

The present work adds to the picture described above.
By definition, perfectoid fields come equipped with a rank-1 valuation with
respect to which they are complete. In particular, the valuation is henselian. 
Nontrivial henselian valuations are often so closely related to the arithmetic of the underlying field that they are encoded in it, i.e., that their valuation ring is first-order definable in the language of rings. 
A classical example which goes back to J.~Robinson is that, for $p\neq 2$, the formula
$$\varphi_p(x)\equiv \exists y (y^2=1+px^2)$$
defines the $p$-adic valuation on the $p$-adic numbers, i.e.,
it is satisfied exactly by those $p$-adic numbers which are $p$-adic integers.
In this note, we investigate when a
perfectoid valuation is $\Lring$-definable (with or without parameters; we refer to the latter as being $\emptyset$-definable), and obtain a complete classification as our main
result:

\begin{thm*}[see Theorem \ref{main-thm}]
    	Let \((K,v)\) be perfectoid. Then \(v\) is definable if and only if
    $v$ is $\emptyset$-definable if and only if at least one of the following holds
	\begin{enumerate}
		\item \((K,v)\) is not defectless, 
		\item \(vK\) is not divisible, or
		\item \(Kv\) is not t-henselian of divisible-tame type.
	\end{enumerate}
\end{thm*}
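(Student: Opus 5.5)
The plan is to reduce the statement to the known classification of definable henselian valuations via t-henselianity and the canonical henselian valuation, due to Jahnke--Koenigsmann. The relevant background result is the following. For a henselian nontrivially valued field $(K,v)$ that is not separably closed, consider the case where $v$ is a coarsening of the canonical henselian valuation and lies in the ``tame'' regime. Then $v$ is $\emptyset$-definable in $\Lring$ unless three things all hold: $(K,v)$ is defectless, $vK$ is divisible, and $Kv$ is t-henselian of divisible-tame type. In that exceptional case, $v$ is not even definable with parameters, since an elementary extension realizes an automorphism-type or density argument moving the valuation ring. The perfectoid setting is rank one, so $v$ has no proper nontrivial coarsenings. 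I expect that either $v$ is the canonical henselian valuation $v_K$ or $Kv$ is itself henselian, and in the latter case $v$ is a proper coarsening of $v_K$. I would first establish this dichotomy and check that the general criterion applies to $v$ in both situations.

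The steps run as follows. First, handle the trivial implication: $\emptyset$-definable implies definable. Second, show that if one of (1)--(3) holds then $v$ is $\emptyset$-definable. For (2), use that a nondivisible value group gives a definition via $n$-th powers. This follows Koenigsmann/Hong: $p$-henselian or $\ell$-henselian valuations with $vK\neq \ell vK$ are $\emptyset$-definable when the residue field does not admit the relevant extensions. Otherwise one falls into the general case. For (1), use that defect forces $Kv$ or $vK$ to be non-tame-like, and invoke the Jahnke--Koenigsmann criterion for definability of henselian valuations with non-tame residue structure. For (3), use that $Kv$ is not t-henselian of divisible-tame type, so the canonical henselian valuation's construction definably picks out $v$.

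Third, for the converse, assume (1)--(3) all fail. Then $(K,v)$ is tame with divisible value group, and $Kv$ is t-henselian of divisible-tame type. I would build an elementary extension $(K^*,v^*)\succeq (K,v)$ in which the ring $\O_v^*$ is not invariant under all automorphisms or elementary maps over any finite parameter set. The natural route is Ax--Kochen--Ershov for tame fields: replace $(Kv,\text{t-henselian valuation})$ by an elementarily equivalent field carrying a nontrivial henselian valuation $u$. Then compose $u$ with $v$ and swap the roles, so that the composite $\O$ yields a different, $\Lring$-indistinguishable valuation. This contradicts definability with parameters.

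The main obstacle is this last step, together with adapting the divisible-tame type condition. I expect tilting to be useful here, since $K$ and $K^\flat$ share $vK$, $Kv$, and defectlessness. One could transfer the non-definability argument between characteristic $p$, where tame field theory is cleanest, and mixed characteristic. To handle the mixed-characteristic case I would use the Jahnke--Kartas ultrapower decomposition. This presents $K^\U$ with a tame coarsening $w$ whose residue field elementarily contains $K^\flat$. That lets me run the characteristic-$p$ argument inside the residue field and lift it.
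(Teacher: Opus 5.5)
\textbf{There is a genuine gap.} Your reduction rests on a ``background result'' that is not available. The trichotomy you quote, with its defect clause, is not a theorem of Jahnke--Koenigsmann; in rank one it is essentially the statement to be proved. So the argument is circular exactly where the work lies.

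\textbf{Case (1).} You claim that defect forces $vK$ or $Kv$ to be ``non-tame-like''. This is false for perfectoid fields, which always have perfect residue field and $p$-divisible value group. Take $k=\F_p(\!(\bigoplus_{i\in\Z}\Q)\!)$ as in Example~\ref{ex:non-def}, and let $(K,v)$ be the completion of the perfect henselian field $\bigcup_n k(\!(t^{1/n})\!)$. Then $K$ is perfectoid with $vK=\Q$ and $Kv=k$. The roots $c+\sum_{i\geq 1}t^{-1/p^i}$, $c\in\F_p$, of $X^p-X-t^{-1}$ lie in the immediate extension $k(\!(t^{\Q})\!)$ but not in $K$, so $(K,v)$ has a degree-$p$ defect extension. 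Here (2) and (3) fail and $Kv$ is not algebraically closed. By Proposition~\ref{prop:notdef}, tame fields with this value group and residue field have non-definable valuations, so any definition must come from the defect itself. Three ingredients are missing:
\begin{itemize}
\item Kuhlmann--Rzepka's theorem that defect in such fields is independent.
\item The Ketelsen--Ramello--Szewczyk definition of a nontrivial coarsening of $v$ (hence, in rank one, of $v$ itself) from a Galois degree-$p$ extension with independent defect, using the coefficients of its minimal polynomial as parameters.
\item The elimination of these parameters (Theorem~\ref{defect-def}). One passes to a finite extension $K'$ carrying such an extension and quantifies over all extensions $F$ of degree $[K':K]$ and all candidate tuples $\vec{d}$. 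The $\emptyset$-definable canonical $p$-henselian valuation, together with rank one, forces each candidate ring into $\O_{v_F}$. The case of algebraically closed $Kv$ is handled separately via Jahnke--Koenigsmann.
\end{itemize}

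\textbf{Case (2).} The rank-one form of Hong's theorem needs no hypothesis on $Kv$. Your Koenigsmann-style condition fails, for example, for the perfectoid field $k(\!(t^{\Z[1/p]})\!)$: only (2) holds there, and $k$ has cyclic extensions of every degree. The promised ``general case'' is never supplied. The actual point (Theorem~\ref{hong}) is that $\omega_q(x,b)$ defines $\m_v$ when $q\nmid v(b)$ and contains $\m_v$ for every $b\neq 0$. Quantifying universally over $b$ therefore removes the parameter.

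\textbf{Case (3).} Appealing to the canonical henselian valuation is not an argument. Jahnke--Koenigsmann define $v_K$ when $Kv_K$ is not t-henselian, which is not what (3) gives you. The paper instead uses Beth's theorem (Fact~\ref{Beth}). If $(F,u_1),(F,u_2)\models\Th(K,v)$, both valuations are henselian with non-separably-closed residue fields, hence comparable. The finer one induces on the residue field of the coarser one a tame valuation with divisible value group (Lemma~\ref{lem:dtame-decompose}). That valuation must be trivial, since the residue field is elementarily equivalent to $Kv$.

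\textbf{Converse.} Your idea is in the right spirit, but two corrections are needed. The valuation $u$ on $(Kv)^{\U}$ must be tame with divisible value group, not merely henselian. The contradiction then comes from Fact~\ref{fact:dt-no-proper-coars-defble}, applied to the proper coarsening $v^{\U}$ of $u\circ v^{\U}$. This works in every characteristic, so neither tilting nor the Jahnke--Kartas decomposition is needed.
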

The definitions and notations are explained in \cref{sec:alg} and \cref{def:ttame}.
Since the characterizing conditions in 
\cref{main-thm} are all preserved under tilting, we conclude:
\begin{kor*}[see {\cref{kor:v-def--vflat-def}}]
	Let \((K,v)\) be perfectoid. 
	Then, the following are equivalent:    
    \begin{multicols}{2}
    \begin{itemize}
        \item \(v\) is definable,
        \item $v$ is $\emptyset$-definable,
        \item \(v^\flat\) is definable,
        \item \(v^\flat\) is $\emptyset$- definable.
    \end{itemize}
    \end{multicols}
\end{kor*}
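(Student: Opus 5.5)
The plan is to derive the corollary from the main classification theorem (\cref{main-thm}), applied twice: once to $(K,v)$ and once to its tilt $(K^\flat,v^\flat)$. The tilt is itself perfectoid (of characteristic $p$), so \cref{main-thm} applies to it as well. Thus $v$ is definable iff $v$ is $\emptyset$-definable iff one of conditions (1)--(3) holds for $(K,v)$. Likewise $v^\flat$ is definable iff $v^\flat$ is $\emptyset$-definable iff one of (1)--(3) holds for $(K^\flat,v^\flat)$. It therefore suffices to show that each of the three conditions holds for $(K,v)$ exactly when it holds for $(K^\flat,v^\flat)$.

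For (2) and (3) I will use the standard facts of the tilting correspondence. The value groups agree, $vK = v^\flat K^\flat$, since $|x^\flat| = |x^\sharp|$ and the sharp map is multiplicative and surjective onto values. The residue fields also agree, $Kv \cong K^\flat v^\flat$, since $\O_{K^\flat}/\m^\flat \cong \O_K/\m$ via the identification $\O_{K^\flat}/\varpi^\flat \cong \O_K/\varpi$. Divisibility of the value group and the property of the residue field being t-henselian of divisible-tame type are properties of the abstract group and the abstract field respectively, so they transfer immediately.

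The main step is (1), defectlessness. I would first try to bypass it. A perfectoid field has $p$-divisible value group and perfect residue field, and I expect the paper's \cref{sec:alg} to show that for perfectoid fields defectlessness is equivalent to a condition visible in value group and residue field. A natural candidate is algebraic maximality, which for complete rank-one fields is detected by $vK$ and $Kv$ via tameness considerations.

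If no such lemma is available, I would transfer defectlessness directly. Finite extensions of $K$ correspond to finite extensions of $K^\flat$ via the Fontaine–Wintenberger/Scholze equivalence. Under this correspondence $L$ is perfectoid, $L^\flat$ is its tilt, and the degrees agree. Since $vL = v^\flat L^\flat$ and $Lv = L^\flat v^\flat$, the ramification indices and residue degrees agree as well. Hence $[L:K] = e f$ holds if and only if $[L^\flat:K^\flat] = e f$. Because both fields are henselian of rank one, defectlessness is equivalent to this equality for every finite extension, so (1) transfers.

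The obstacle is precisely this step. It requires the degree-preservation of the tilting equivalence for finite extensions, which is a deep but standard fact (Scholze, \cite{scholze2012perfectoid}), and I will cite it rather than reprove it. With all three conditions preserved, the four statements in the corollary are equivalent.
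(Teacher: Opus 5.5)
Your argument is correct and is the paper's own: apply \cref{main-thm} to $(K,v)$ and to its perfectoid tilt, then transfer conditions (1)--(3) using that tilting preserves value group, residue field and defectlessness. The paper simply cites Wintenberger for the defectlessness transfer; your degree-preserving tilting-equivalence argument proves exactly that, provided the identifications $vL=v^\flat L^\flat$ and $Lv\cong L^\flat v^\flat$ are compatible with those for $K$, which they are. Your first idea of bypassing the defect condition would fail, however: defectlessness of a perfectoid field is not determined by $vK$ and $Kv$, since the completed perfect hull of $\F_p(\!(t)\!)$ has defect while the Hahn field $\F_p(\!(\Z[1/p])\!)$, with the same value group and residue field, is tame.
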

We then comment on how uniformity of definitions can be produced or dispelled arbitrarily,
see \cref{uniform}. In \cref{sec:qf}, we focus on quantifier complexity of
definitions (cf.~\cref{sec:mt} for definitions and notations). Building on work of Anscombe and Fehm (\cite{anscombe-fehm2017characterizing}), we show that $\forall$-$\emptyset$-definability and 
$\exists$-$\emptyset$-definability untilt (\cref{untilt}), but that 
$\forall$-$\emptyset$-definability does not tilt (\cref{nottilt}). Whether 
$\exists$-$\emptyset$-definability tilts remains an open question. Finally, in \cref{sec:classical}, we discuss the classical case of perfectoid fields $(K,v)$ for which
the residue field is an algebraic extension of $\mathbb{F}_p$. The results in this section 
are also straightforward applications of those proven in 
\cite{anscombe-fehm2017characterizing}. We show that if $K$ is not algebraically closed, then $v$ is $\emptyset$-definable, and that if the residue field
is not algebraically closed, $v$ is $\exists$-$\emptyset$-definable.

\bigskip

\textbf{Acknowledgements. }
First and foremost, we thank the organizers of WINE 5 which took place in summer 2025 in Split (Croatia), a lovely event which brought us together and
where the bulk of this work was completed. We also thank our assistant Nora Jahnke
for her cheerful presence and immense patience with us during said event, and Sylvy Anscombe for her excellent 
input into the project, especially regarding \cite{anscombe-fehm2017characterizing} as well as help with various examples. G.A. is a member of the GNSAGA-INdAM. F.J.'s research was 
supported by the Deutsche Forschungsgemeinschaft (DFG, German Research Foundation)
via the ANR-DFG grant AKE Pact (project number 545528554) and a Visiting Fellow position at Merton College, Oxford.
Both F.J. and M.K were supported under Germany's Excellence Strategy EXC 2044/2-390685587, Mathematics M\"unster: Dynamics--Geometry--Structure. 
 
\section{Preliminaries}
\subsection{Algebraic background} \label{sec:alg}
In this subsection, we give an introduction to the notions and tools from 
valuation theory which
we are using throughout. For a more thorough introduction to valued fields,
see \cite{engler2005valued} and \cite{efrat2006valuations}.
\subsubsection*{Notation}
Given a valued field \((K,v)\), we denote its value group by \(vK\), its valuation ring by \(\O_v\), with maximal ideal \(\m_v\), and its residue field by \(Kv\). We denote the residue map \(\O_v\to Kv=\O_v/\m_v\) by \(\res_v\).

\subsubsection{Coarsenings and induced valuations}
In the following we briefly recall the concepts of 
coarsenings and valuation (de)composition. For more details, see \cite[Section~2.3]{engler2005valued}.

Given a field \(K\) and valuations \(v\) and \(w\) on \(K\), we say that \(w\) is a \emph{coarsening} of \(v\) if \(\O_w\supseteq \O_v\).
In this case, we also say that \(w\) is coarser than \(v\), \(v\) is finer than \(w\), or \(v\) is a refinement of \(w\). 

Given a valued field \((K,v)\) and a coarsening \(w\) of \(v\), we find the \emph{induced valuation} \(\overline{v}\) on the residue field \(Kw\):
\[
\overline{v}\colon (Kw)\x \to \Delta,\; \overline{v}(\res_w(x))=v(x),\;  x\in \O_w\x
\]
where \(\Delta\coloneqq \overline{v}((Kw)\x)=v(\O_w\x)\subseteq vK\) is a convex subgroup of \(vK\).
Note that \((Kw)\overline{v}=Kv\).

In this case, we can also write \(v=\barv\circ w\); this is called \emph{valuation decomposition}. 
When representing the valuations by the corresponding places, we obtain the following diagram
\begin{center}
	\begin{tikzcd}
		K \arrow[r, "w"] \arrow[rr, "v", bend left=27] & Kw \arrow[r, "\overline{v}"] & Kv. 
	\end{tikzcd}
\end{center}
Conversely, given a valued field $(K,w)$ and a valuation $u$ on the residue field
$Kw$, one can also form a \emph{valuation composition} \(v=u\circ w\): this is a valuation with valuation
ring $\mathcal{O}_v=\res_w^{-1}(\mathcal{O}_u)$ and in particular a refinement of $w$.

\subsubsection{Defectless valuations}
Let \((K,v)\) be a valued field and let \(K \subseteq L\) be a finite field 
extension. Then $v$ extends to finitely many valuations $w_1, \dotsc, w_r$ on $L$
(with $r \leq [L:K]$, see \cite[Theorem 3.2.9]{engler2005valued}).
Recall that any extension of valued fields \((K,v)\subseteq (L,w)\) induces embeddings of the value groups \(vK\subseteq Lw\) and of the residue fields \(Kv\subseteq Lw\).
We have the following fact:
\begin{fact}[Fundamental inequality, see {\cite[Theorem 3.3.4]{engler2005valued}}]
For a valued field $(K,v)$ and $L/K$ finite, let $w_1, \dotsc, w_r$ denote the
prolongations of $v$ to $L$. Then 
	\[\sum_{i=1}^r (w_iL:vK)[Lw_i:Kv]\leq [L:K].\]
\end{fact}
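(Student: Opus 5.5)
The plan is to reduce to the case of a single prolongation, where the estimate becomes a statement about linearly independent elements, and then pass to the general case by approximation. First, for one valuation $w$ on $L$ extending $v$, choose elements $a_1,\dots,a_e\in L^\times$ whose values $w(a_i)$ lie in distinct cosets of $vK$ in $wL$. Next, choose $b_1,\dots,b_f\in\O_w$ whose residues are linearly independent over $Kv$. We then aim to show that the $ef$ products $a_ib_j$ are $K$-linearly independent. This gives
\[
(wL:vK)[Lw:Kv]\le [L:K],
\]
and in particular both factors are finite.

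The independence argument runs as follows. Suppose $\sum c_{ij}a_ib_j=0$ with $c_{ij}\in K$ not all zero. For each fixed $i$, divide by a coefficient $c_{ij_0}$ of minimal value. The residues of the $b_j$ are independent, so $w\bigl(\sum_j c_{ij}b_j\bigr)=\min_j v(c_{ij})\in vK$. Hence the summands
\[
a_i\sum_j c_{ij}b_j
\]
have values in pairwise distinct cosets of $vK$, so these values are distinct. The valuation of the total sum is therefore the minimum of these values, which is finite, contradicting that the sum is $0$.

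For several prolongations $w_1,\dots,w_r$, we use that they are pairwise incomparable; they are distinct and all restrict to $v$, so none is a proper coarsening of another. By the approximation theorem for independent valuations, or its variant for incomparable valuations via the ring $\bigcap_k\O_{w_k}$, choose for each $k$ elements $a^{(k)}_i, b^{(k)}_j$ as above. Then perturb them so that, for every $l$, each element of family $k$ is $w_l$-close to its prescribed value when $l=k$ and has very large $w_l$-value when $l\neq k$. We then need to show that the union of all the product families, $\sum_k e_kf_k$ elements in total, is $K$-linearly independent.

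To do this, take a nontrivial relation. Pick the index $k$ at which the minimal value, taken over the contributions of family $k$, is attained, after rescaling so that this minimum equals $0$. Then evaluate with $w_k$. Contributions from family $k$ have $w_k$-value computed as in the one-valuation case, while all other families contribute strictly larger $w_k$-value by the choice of approximation. This gives a contradiction.

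The main obstacle is this approximation step. Its error bounds must be uniform with respect to the unknown coefficients, so we normalize the coefficients first. Also, the valuations $w_k$ need not be independent, since they may share a common nontrivial coarsening, so the strong approximation theorem cannot be quoted directly. I would first try the standard route: localize at the semilocal ring $\bigcap_k\O_{w_k}$ and use the Chinese remainder theorem on its maximal ideals, which correspond to the $w_k$, to produce the idempotent-like elements needed.
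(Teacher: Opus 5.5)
The paper gives no proof of this Fact; it simply quotes Engler--Prestel, Theorem~3.3.4. Measured against the standard argument, your one-prolongation step is correct; it is the usual lemma giving \((wL:vK)[Lw:Kv]\le[L:K]\).

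\textbf{The gap.} The passage to several prolongations has a genuine gap. It sits exactly at the point you flag, and the remedy you propose does not close it.

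Suppose \(w_k\) and \(w_l\) have a common nontrivial coarsening \(u\) on \(L\). Then any element that is \(w_l\)-close to a prescribed element has the same \(u\)-value. Hence its \(w_k\)-value is determined modulo the proper convex subgroup \(\ker(w_kL\to uL)\).

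The Chinese remainder theorem in \(\bigcap_k\O_{w_k}\) does give \(t\) with \(w_l(t-1)>0\) and \(w_k(t)>0\). But \(u(t)=0\) traps \(w_k(t)\), and every \(w_k(t^N)\), inside that subgroup. So ``very large'' cross-values are unattainable.

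Worse, your evaluation step fails for \emph{every} choice of approximants. Take \(K=\Q(\!(t)\!)\) with \(v\) the composite of the \(t\)-adic valuation and the \(5\)-adic valuation on \(\Q\). Then \(vK=\Z\times\Z\), ordered lexicographically. Let \(L=K(\sqrt t,\sqrt{-1})\), which has degree \(4\) over \(K\).
\begin{itemize}
\item The \(t\)-adic valuation has a unique prolongation \(u\) to \(L\), with \(uL=\frac12\Z\) and \(Lu=\Q(\sqrt{-1})\).
\item \(v\) has exactly two prolongations \(w_1,w_2\): the composites of \(u\) with the two prolongations of \(v_5\) to \(\Q(\sqrt{-1})\).
\item Here \(w_kL=\frac12\Z\times\Z\), \(e_k=2\), \(f_k=1\), and the two cosets of \(vK\) in \(w_kL\) are distinguished by the \(u\)-value modulo \(\Z\).
\end{itemize}
The approximants are fixed before the relation is known. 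So your estimate ``every other family contributes strictly larger \(w_k\)-value'' must hold for every normalised coefficient vector.

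Test it with coefficient \(1\) on one element \(z\) of the selected family \(k\), and coefficient \(5\) on one element \(z'\) of the other family. This forces \(w_k(5z')>w_k(z)\), hence \(u(z')\ge u(z)\). By symmetry all four elements would then have the same \(u\)-value. But each family must contain elements with \(u\)-values in \(\Z\) and in \(\frac12+\Z\), a contradiction.

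\textbf{What survives, and how to repair it.} As written, your argument proves the inequality only when the prolongations are pairwise independent. This covers the case where \(v\) has rank \(1\), which includes the perfectoid case. There the strong approximation theorem supplies your perturbations with coefficient-independent bounds. The Fact, however, is stated for arbitrary valued fields, and defectlessness is used in the paper for valuations of higher rank (e.g.\ the composites in \cref{lem:dtame-decompose}).

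A clean way to handle dependent prolongations is through a henselization \((K^h,v^h)\subseteq(K\alg,v\alg)\).
\begin{enumerate}
\item By the conjugation theorem, write each \(w_i=v\alg\circ\sigma_i\) for a \(K\)-embedding \(\sigma_i\colon L\to K\alg\).
\item Since \(v^h\) has a unique prolongation to \(K\alg\), distinct \(w_i\) come from embeddings that are not conjugate over \(K^h\).
\item Hence they give distinct maximal ideals of \(L\otimes_KK^h\), with residue fields \(E_i=\sigma_i(L)K^h\).
\item The Chinese remainder theorem then yields \(\sum_i[E_i:K^h]\le[L:K]\).
\item Since \(K^h/K\) is immediate, your one-prolongation lemma applied to \((E_i,v\alg)/(K^h,v^h)\) gives \(e_if_i\le[E_i:K^h]\).
\end{enumerate}

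If you want to keep your direct approach instead, two changes are needed.
\begin{itemize}
\item Select \(k\) by minimising the \(w_k\)-value of the \(k\)-th family's contribution, compared in the divisible hull of \(vK\), rather than by coefficient values.
\item Choose the representatives \(a^{(l)}_i\) so that \(w_j(a^{(l)}_i)\ge w_l(a^{(l)}_i)\) for all \(j\). This needs an approximation theorem for dependent valuations with its compatibility conditions; the Chinese remainder theorem alone is not enough.
\end{itemize}
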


We say that \((K,v)\) is \emph{defectless} if equality holds for all finite field 
extensions $L$ in the fundamental inequality.
As henselian valuations extend uniquely to any finite extension, a henselian valuation
$(K,v)$ is hence defectless if for all $L/K$ finite
\[ (wL:vK)[Lw:Kv] = [L:K].\]
holds, where $w$ denotes the unique prolongation of $v$ to $L$. If 
$(K,v)$ is henselian and the fundamental equality fails for
$(L,w)/(K,v)$, we say the extension $(L,w)/(K,v)$ has \emph{defect}. 

\subsubsection{Tame fields}
Tame fields were been introduced and studied by Franz-Viktor Kuhlmann in \cite{kuhlmann2016algebra}. See \cite{kuhlmann2025modeltheorytamevalued} for a survey on recent developments.

\begin{defi}
	A henselian valued field \((K,v)\) is \emph{tame} if
	\begin{enumerate}
		\item \((K,v)\) is defectless,
		\item \(Kv\) is perfect, and
		\item \(vK\) is \(p\)-divisible, if \(\chara(Kv)=p>0\).
	\end{enumerate}
\end{defi}

In the following, we will often encounter tame valued fields with \emph{divisible value group}, i.e., for each \(\gamma\in vK\) and \(n\in \N\), there is \(\delta\in vK\) such that \(n\delta=\gamma\).

\begin{examp}
    Let \(k\) be a perfect field of positive characteristic and let \(\Gamma\) be a \(p\)-divisible ordered abelian group.
    Then, the \emph{Hahn series} field \((k(\!(\Gamma)\!),v_\Gamma)\) is a tame valued field.
    The Hahn field construction provides a canonical way to obtain maximally valued fields of equal characteristic with a given value group and residue field.
    For more details see \cite[Section~2.8]{efrat2006valuations}.
\end{examp}

\subsubsection{Perfectoid fields and tilting}
We now give the definitions for the key objects in this paper: perfectoid
fields and their tilts. For more details, see
 \cite{scholze2012perfectoid} and \cite{lurie2018fargues}.
\begin{defi}
	A valued field \((K,v)\) of residue characteristic \(\chara(Kv)=p>0\) is \emph{perfectoid} if all of the following hold:
	\begin{enumerate}
		\item $vK$ has rank 1, i.e. \(vK\leq \R\), and \((K,v)\) is complete, 
		\item \(\O_v/p\O_v\) is \emph{semi-perfect}, that is the Frobenius map 
        \(\O_v/p\O_v\to \O_v/p\O_v, x\mapsto x^p\) is surjective, and
		\item \((K,v)\) is not discretely valued, that is \(vK\ncong\Z\).
	\end{enumerate}
\end{defi}
Examples of perfectoid fields include Hahn series fields $(k(\!(\Gamma)\!),v_\Gamma)$
where $k$ is a perfect field and $\Gamma \leq \mathbb{R}$ is $p$-divisible,
as well as more generally any rank-$1$ complete tame field (\cite[see the remarks on p.~2695 and Theorem 1.2]{kuhlmann-rzepka2023valuation}). More generally however,
perfectoid fields are necessarily henselian but need not be defectless, e.g., the completions of 
$(\mathbb{F}_p(\!(t)\!)^\mathrm{perf},v_t)$ and $(\mathbb{Q}_p(p^{1/p^{\infty}}),v_p)$
are perfectoid but not defectless.

Crucially, to any perfectoid field $(K,v)$ one can associate its tilt $(K^\flat,v^\flat)$, a perfectoid field of positive characteristic. 

\begin{defi}
    Let \((K,v)\) be perfectoid. 
    There is an inverse system of rings
    \[
    \ldots \overset{x\mapsto x^p}{\longrightarrow} \O_v/p\O_v \overset{x\mapsto x^p}{\longrightarrow} \O_v/p\O_v.
    \]
    We define
    \[
    \O_v^\flat \coloneqq \varprojlim_{x\mapsto x^p} \O_v/p\O_v.
    \]
    Now, set \(K^\flat \coloneqq \operatorname{Frac}(\O_v^\flat)\), and let \(v^\flat\) be the valuation on \(K^\flat\) with valuation ring \(\O_v^\flat\) and value group \(vK\).
    The valued field \((K^\flat,v^\flat)\) is called the \emph{tilt} of \((K,v)\).
\end{defi}

As mentioned in the introduction, the
arithmetic of $(K,v)$ and $(K^\flat,v^\flat)$ are `similar'. We only recount
the few basic instances of this similarity that we need for our purposes:
\begin{fact} Let \((K,v)\) be a perfectoid field. \label{fact:perfectoid}
    \begin{enumerate} 
        \item If $\chara(K)=p>0$, then \((K^\flat,v^\flat)\cong (K,v)\).
        \item If $\chara(K)=0$, then \((K,v)\) is a perfectoid field of positive characteristic and we have
    \begin{enumerate}
        \item \(vK \cong v^\flat K^\flat\),
        \item \(\O_v/p\cong \O_{v^\flat}/\varpi\) for some element \(\varpi\in \O_{v^\flat}\), called \emph{pseudouniformizer}, 
        \item \(Kv\cong K^\flat v^\flat\), and
    \end{enumerate}
            \item \((K,v)\) is defectless if and only if \((K^\flat,v^\flat)\) is defectless.
    \end{enumerate}
\end{fact}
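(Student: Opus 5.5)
The plan is to treat the three items separately. The first two are direct computations with the inverse limit. The third is reduced, via the tilting equivalence for finite extensions, to the invariants computed in the second.

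For (1), suppose $\chara(K)=p$. Then $p=0$ in $\O_v$, so $\O_v/p\O_v=\O_v$. Semi-perfectness says that Frobenius is surjective on $\O_v$, hence $K$ is perfect. Frobenius is then a ring automorphism of $\O_v$, so the inverse system $\ldots\to\O_v\to\O_v$ consists of isomorphisms. Projection to the $0$-th coordinate, $(x_n)_n\mapsto x_0$, is therefore a ring isomorphism $\O_v^\flat\to\O_v$. Passing to fraction fields gives $K^\flat\cong K$, and this isomorphism carries $\O_v^\flat$ onto $\O_v$, so it is an isomorphism of valued fields.

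For (2), suppose $\chara(K)=0$. The key tool is the multiplicative bijection $\O_v^\flat\cong\varprojlim_{x\mapsto x^p}\O_v$, $x\mapsto(x^{(n)})_n$. Given $x=(\bar x_n)_n$ with arbitrary lifts $\tilde x_n\in\O_v$, one sets $x^{(n)}=\lim_m \tilde x_{n+m}^{p^m}$. This limit exists and is independent of the lifts, because $a\equiv b \bmod p$ implies $a^{p^m}\equiv b^{p^m}\bmod p^{m+1}$, and $(K,v)$ is complete. Write $x^\sharp=x^{(0)}$ and set $v^\flat(x)=v(x^\sharp)$. The steps are:
\begin{itemize}
\item Check that $v^\flat$ is a valuation, i.e.\ multiplicativity and the ultrametric inequality. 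The latter follows from $(x+y)^\sharp=\lim_m (x^{(m)}+y^{(m)})^{p^m}$.
\item Show that $v^\flat(\O_v^\flat\setminus\{0\})\supseteq vK\cap[0,v(p))$. Starting from $a\in\O_v$, semi-perfectness lets one choose compatible $p$-power roots of $a$ modulo $p$, giving $x$ with $v(x^\sharp)=v(a)$ whenever $v(a)<v(p)$. Since $vK\le\R$ has rank $1$, this interval generates $vK$, so $v^\flat K^\flat=vK$. The value group is not $\Z$ and $K^\flat$ is perfect by construction.
\item Pick $\varpi$ with $v^\flat(\varpi)=v(p)$. The map $\O_v^\flat\to\O_v/p$, $x\mapsto \bar x_0$, is surjective by semi-perfectness. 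Its kernel is $\{x: v(x^\sharp)\ge v(p)\}=\varpi\O_v^\flat$, giving $\O_v/p\cong\O_v^\flat/\varpi$.
\item Reduce this isomorphism modulo the images of the maximal ideals, which correspond to each other since the valuations match. This gives $Kv\cong K^\flat v^\flat$.
\item Completeness of $\O_v^\flat$ is inherited from the inverse limit of $\varpi$-adically discrete pieces.
\end{itemize}

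For (3), I would invoke the tilting equivalence of Scholze and Fontaine--Wintenberger rather than reprove it. It says that every finite extension $L/K$ is perfectoid, and that $L\mapsto L^\flat$ is a degree-preserving bijection between finite extensions of $K$ and of $K^\flat$, with $[L:K]=[L^\flat:K^\flat]$. Applying (2) to $L$ gives $wL=w^\flat L^\flat$ and $Lw\cong L^\flat w^\flat$, compatibly with the inclusions of $vK$ and $Kv$. Hence $(wL:vK)[Lw:Kv]=[L:K]$ if and only if the same equality holds for $L^\flat/K^\flat$. Since the correspondence is essentially surjective on finite extensions of $K^\flat$, defectlessness transfers in both directions.

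The main obstacle is (3): it depends entirely on the almost-purity/tilting equivalence, including the preservation of degrees and the perfectoidness of finite extensions. Without it, one would need an independent argument, for instance via Kuhlmann--Rzepka's characterisation of defect in terms of $\O_v/p$, which is preserved by (2)(b).
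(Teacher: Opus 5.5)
Your proposal is correct and is essentially the paper's argument written out. The paper only cites Scholze's Lemma~3.4 for (1)--(2), whose proof is your $\sharp$-map computation. For (3) it cites the fact that the tilting (field-of-norms) correspondence preserves degrees, ramification indices and residue degrees of finite extensions (Wintenberger, Cor.~3.3.4; Jahnke--Kartas, Rem.~6.2.5), which is what you get from the tilting equivalence combined with (2) applied functorially to each finite $L/K$.

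One detail you leave implicit is that $\O_v^\flat$ is exactly the valuation ring of $v^\flat$ on $K^\flat$. You need this both for the existence of $\varpi\in\O_v^\flat$ and for the equality $\ker=\varpi\O_v^\flat$. It is quick: if $v(x^\sharp)\ge v(y^\sharp)$, then the compatible system $(x^{(n)}/y^{(n)})_n$ defines $z\in\O_v^\flat$ with $x=yz$.
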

\begin{proof}
    (1) and (2) are part of {\cite[Lemma 3.4]{scholze2012perfectoid}}. (3)
follows from \cite[Corollaire 3.3.4]{Wint} -- see also 
\cite[Remark 6.2.5]{jahnke-kartas2023beyond} for the precise statement.
\end{proof}

\subsection{Model-theoretic background}
In this section, we give a very brief introduction into the model-theoretic 
concepts which appear throughout the paper. For more details, we refer the reader to
\cite{hodges1993model} and \cite{hodges1997shorter}.
Throughout this section, let $\L$ be a first-order language and $M$ and $N$ and $\L$-structures.
We write $M \equiv_\L N$ if $M$ and $N$ are elementarily equivalent as $\L$-structures,
i.e., the same $\L$-sentences hold in $M$ and $N$. If $M$ is an $\L$-substructure of 
$N$, we
write $M \preceq_\L N$ in case the given inclusion of $M$ into $N$ is
elementary, that is for all $\L$-formula $\varphi(\vec{x})$, where $\vec{x}$ is 
finite tuple of variables of length $|\vec{x}|$ and all $\vec{m} \in M^{|\vec{x}|}$,
we have
$$M \models \varphi(\vec{m}) \Longleftrightarrow N \models \varphi(\vec{m}).$$
We tend to drop the subscript $\L$ if the language is clear from the context.
For an $\L$-theory $T$, i.e., whenever $T$ is a set of $\L$-sentences, we write
$M \models T$ to indicate that any $\psi \in T$ holds true in $M$.

\label{sec:mt}
\subsubsection{Definability} \label{subsec:def}
We say that a subset $A\subseteq M^n$ is {\em definable} if there is an 
$\mathcal{L}$-formula $\varphi(\vec{x}, \vec{y})$, where $\vec{x}$ and $\vec{y}$ are finite tuples of variables of length $|\vec{x}|=n$ and $|\vec{y}|$
respectively, and some $\vec{b} \in M^{|\vec{y}|}$, such that for any 
$\vec{a}\in M^n$, we have
$$
 \vec{a}\in A \Longleftrightarrow M\models\varphi(\vec{a},\vec{b}),
$$
holds,
in which case we also say that $\varphi(\vec{x},\vec{b})$ defines $A$. We write 
$\varphi(M^n, \vec{b})$ for the set
of elements of $M^n$ which satisfy $\varphi(\vec{x},\vec{b})$.

For a subset $B$ of $M$ we call $\varphi(\vec{x},\vec{b})$ a $B$-formula if 
$\vec{b} \in B^{|\vec{y}|}$ and call $\vec{b}$ the parameters needed to define the 
set
$\varphi(M^{|\vec{x}|},\vec{b})$.
Moreover, we call $\varphi(\vec{x},\vec{y})$ an $\exists$-formula (resp.~$\forall$-formula etc.) if
it is equivalent to a formula in prenex normal form with precisely these quantifiers.
We then say that $A$ is $B$-definable, $\exists$-$B$-definable etc.~if there exists a corresponding formula defining $A$. If $B=\{b\}$ is a singleton, we use $B$-definability and $b$-definability synonymously. A crucial case for us is when
$B = \emptyset$, in which case we say that the corresponding set is defined without parameters.

\subsubsection{Ultrapowers, \polishL{}o\'s and Keisler--Shelah}
The ultrapower construction is a very common and powerful tool.
We refer the reader to \cite[Section~8.5]{hodges1997shorter} for an introduction to filters, ultrafilters, ultrapowers and ultraproducts.
Here, we just state two very useful theorems: \Losthm{}, and the Keisler--Shelah isomorphism theorem.

	Let \(\U\) be an ultrafilter on some index set.
	We denote the \emph{ultrapower} of \(M\) by \(M^\U\):
	
	The underlying set is \(M^\U=\faktor{\prod_{i\in I} M}{\sim_\U}\), where \(\sim_\U\) is the equivalence relation given by 
	\[
	(a_i)_{i\in I}\sim_\U (b_i)_{i\in I} \qquad \Longleftrightarrow \qquad \{i\in I: a_i=b_i\}\in \U.
	\]
	The $\L$-structure on $M$ induces a natural $\L$-structure on $M^\U$, see 
    \cite[Section 8.5]{hodges1997shorter} for details.
	The \emph{diagonal embedding} of $M$ into $M^\U$ is given by
	\begin{align*}
	\iota_M\colon M&\hookrightarrow M^\U\\
	a &\mapsto \left[(a)_{i\in I}\right].
	\end{align*}

The diagonal embedding is an embedding of $\L$-structures, and it is even an elementary 
embedding:
\begin{thm}[\polishL{}o\'s's theorem, see {\cite[Corollary~8.5.4]{hodges1997shorter}}]
	\label{thm:los}
	Let \(M\) be a structure in some language \(\L\) and let \(\U\) be an ultrafilter on some index set \(I\).
	The diagonal embedding \(\iota_M\colon M\hookrightarrow M^\U\) is elementary. In particular, \(M\equiv_\mathcal{L} M^\U\).
\end{thm}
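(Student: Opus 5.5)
The statement to be proved is \Losthm{}, in the form: the diagonal embedding \(\iota_M\colon M\hookrightarrow M^\U\) is elementary. The plan is to prove the stronger, standard version of \L{}o\'s's theorem. For every \(\L\)-formula \(\varphi(x_1,\dots,x_n)\) and all tuples \([(a^1_i)_{i\in I}],\dots,[(a^n_i)_{i\in I}]\in M^\U\), we show
\[
M^\U\models \varphi\bigl([(a^1_i)],\dots,[(a^n_i)]\bigr) \iff \{i\in I : M\models \varphi(a^1_i,\dots,a^n_i)\}\in\U .
\]
Elementarity of \(\iota_M\) then follows by taking constant sequences \(a^k_i=m_k\). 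In that case the index set on the right is either all of \(I\) (which lies in \(\U\)) or empty (which does not), according to whether \(M\models\varphi(\vec m)\). Choosing \(\varphi\) to be a sentence gives \(M\equiv_\L M^\U\).

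The proof goes by induction on the construction of \(\varphi\), with \(\neg,\wedge,\exists\) as the primitive connectives.

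\emph{Terms.} First, by induction on terms, the value of a term \(t\) at classes of sequences is the class of the coordinatewise values \((t^M(a_i))_{i\in I}\). This holds by the definition of the induced structure, which is well defined because \(\U\) is closed under finite intersections and supersets.

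\emph{Atomic formulas.} For an equation \(t_1=t_2\) or a relation \(R(t_1,\dots,t_k)\), the claim is exactly the definition of \(\sim_\U\) and of the interpretation of \(R\) in \(M^\U\).

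\emph{Negation.} This case uses that \(\U\) is an ultrafilter: for any \(X\subseteq I\), exactly one of \(X\) and \(I\setminus X\) lies in \(\U\).

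\emph{Conjunction.} This case uses that \(X\cap Y\in\U\) if and only if \(X\in\U\) and \(Y\in\U\), by the filter axioms.

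\emph{Existential quantifier.} This is the only substantive step; say \(\varphi=\exists y\,\psi(\vec x,y)\).
\begin{itemize}
\item If \(M^\U\models\psi(\vec a,[(b_i)])\), the induction hypothesis places the witnessing index set in \(\U\). That set is contained in \(\{i : M\models\exists y\,\psi(\vec a_i,y)\}\), so the latter is in \(\U\) by upward closure.
\item Conversely, suppose \(X=\{i : M\models\exists y\,\psi(\vec a_i,y)\}\in\U\). Use the axiom of choice to pick, for each \(i\in X\), a witness \(b_i\in M\), and let \(b_i\) be arbitrary for \(i\notin X\) (possible since \(M\neq\emptyset\)). Then \(\{i: M\models\psi(\vec a_i,b_i)\}\supseteq X\) lies in \(\U\), and the induction hypothesis gives \(M^\U\models\psi(\vec a,[(b_i)])\).
\end{itemize}

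The main obstacle is bookkeeping rather than any deep difficulty. One must verify that all definitions are independent of the chosen representatives, and handle the choice of witnesses in the existential step. Both reduce to the filter and ultrafilter properties together with choice.
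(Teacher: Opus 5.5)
Your argument is correct and is the standard proof. The paper gives no proof of its own and simply cites \cite[Corollary~8.5.4]{hodges1997shorter}, which is obtained there the same way: first the fundamental theorem of ultraproducts is proved by induction on formulas, then it is specialized to constant sequences (with well-definedness, the ultrafilter property for negation, and choice for the existential step handled as you describe).
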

The second key theorem we need about ultraproducts is the Keisler--Shelah Theorem:
\begin{thm}[Keisler--Shelah isomorphism theorem, see {\cite[Theorem~8.5.10]{hodges1997shorter}}]
	\label{thm:Keisler-Shelah}
	Let \(M\) and \(N\) be structures in some language \(\L\), and assume \(M\equiv_\mathcal{L} N\).
	Then there is an ultrafilter \(\U\) on some index set \(I\) such that
	\[
	M^\U\cong N^\U.
	\]
\end{thm}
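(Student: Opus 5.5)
We would prove the Keisler--Shelah theorem by a back-and-forth argument between two saturated ultrapowers. The plan is to find an ultrafilter $\U$ for which both $M^\U$ and $N^\U$ are saturated of the same cardinality. The uniqueness of saturated models then finishes the argument: two elementarily equivalent saturated structures of the same cardinality are isomorphic, by a transfinite back-and-forth of partial elementary maps of size below that cardinality.

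Step 1 is to choose an infinite cardinal $\lambda \geq |\L| + |M| + |N|$ and a regular, $\lambda^+$-good, countably incomplete ultrafilter $\U$ on $\lambda$. Such ultrafilters exist by Keisler's construction in ZFC. Step 2 uses Keisler's saturation theorem: for every $\L$-structure $A$ with $|\L| \leq \lambda$, the ultrapower $A^\U$ is $\lambda^+$-saturated. The proof realizes a consistent type $p=\{\varphi_\alpha\}_{\alpha<\lambda}$ coordinatewise. Countable incompleteness supplies a decreasing chain of sets in $\U$ with empty intersection, and goodness lets us refine the monotone map $s\mapsto\{i : A\models\exists x\bigwedge_{\alpha\in s}\varphi_\alpha(x,\bar a_i)\}$, where $\bar a_i$ is the $i$-th coordinate of the parameters, to a multiplicative one. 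We then pick witnesses in each coordinate and apply \Losthm{} (\cref{thm:los}).

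Step 3 computes cardinalities. Regularity of $\U$ gives $|A^\U| = |A|^\lambda = 2^\lambda$ whenever $2\leq|A|\leq 2^\lambda$, so $|M^\U| = |N^\U| = 2^\lambda$. Step 4 observes that $M^\U \equiv M \equiv N \equiv N^\U$ by \Losthm{}. If $2^\lambda = \lambda^+$ (for instance under GCH), both ultrapowers are saturated of cardinality $\lambda^+$. The back-and-forth of length $\lambda^+$ then produces $M^\U \cong N^\U$.

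The main obstacle is removing the assumption $2^\lambda=\lambda^+$. Without it, $\lambda^+$-saturation does not reach the full cardinality $2^\lambda$, so the uniqueness argument fails. For a ZFC proof we would instead follow Shelah's approach and build the ultrafilter and the isomorphism simultaneously. We fix a large independent family of functions on $\lambda$ (Engelking--Kar\l owicz) and run a transfinite construction of length $2^\lambda$. At each stage we enumerate the elements of $M^\lambda$ and $N^\lambda$ and extend a partial map $f$ between them, which determines elements of $M^\U$ and $N^\U$ once $\U$ is fixed. At the same time we enlarge a filter generated by part of the independent family. Each extension step adds one element on one side and must keep the $\L$-formulas transferred along $f$ in the filter. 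We keep enough of the independent family unused at every stage, so that the remaining functions can always decode a fresh coordinatewise choice of a matching element, as in the goodness argument of Step 2. This bookkeeping is the delicate part of the proof.
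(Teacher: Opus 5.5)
\paragraph{Assessment.} Your Steps 1--4 are Keisler's original proof. They are correct only under the extra hypothesis $2^\lambda=\lambda^+$ for some $\lambda\geq|\L|+|M|+|N|$. Two small fixes are needed there:
\begin{itemize}
\item countably incomplete $\lambda^+$-good ultrafilters exist in ZFC by Kunen's theorem; Keisler's own construction used GCH;
\item $|A^\U|=|A|^\lambda$ holds only for infinite $A$, since $A^\U\cong A$ when $A$ is finite. The finite case should be split off, and there $M\equiv N$ already gives $M\cong N$.
\end{itemize}
The paper does not prove the theorem; it quotes the ZFC version from Hodges, and that is the version it needs. For the ZFC case your last paragraph only announces a strategy. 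The one step that carries the content of Shelah's theorem is missing: how to extend the partial map by one element while keeping the filter proper and a large part of the independent family independent modulo it.

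\paragraph{Why this is a genuine gap.} The step cannot be done ``as in the goodness argument of Step 2''. A goodness-type refinement handles only types with at most $\lambda$ formulas, whether you use goodness of $\U$ directly or build it during the construction. Building it means Kunen's device: code the finite subsets $s_j$ of the type as the values $j<\lambda$ of one fresh function $h$, and add the sets $\bigcup_{j:\, s\subseteq s_j}(X_{s_j}\cap h^{-1}(j))$. Then in coordinate $i$ you only have to realize the finite set $s_{h(i)}$.

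In a construction of length $2^\lambda>\lambda^+$, at every stage $\alpha\geq\lambda^+$ the partial map already has $|\alpha|>\lambda$ elements. So the type of the new element has more than $\lambda$ formulas. A counting argument then kills any finite distribution: if each of the $\lambda$ coordinates is assigned finitely many formulas, at most $\lambda$ formulas are ever assigned, so some formula gets no coordinate at all.

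This is not a bookkeeping issue. A countably incomplete ultrafilter on $\lambda$ is never $\lambda^{++}$-good, so in general $M^\U$ and $N^\U$ do not realize all types over $\lambda^+$ parameters. The particular transferred types you need must therefore be realized by a genuinely different argument. You would also have to show that the unused functions remain independent modulo the enlarged filter, even though the enumerated elements of $M^\lambda$ and $N^\lambda$ are arbitrary functions that may be correlated with them.

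Without such an extension lemma (this is what Shelah's 1971 proof supplies), the proposal proves the statement only under $2^\lambda=\lambda^+$.
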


\subsubsection{Some model theory of valued fields}
In this paper, we consider (valued) fields in the language
\(\Lring=\{0,1,+,-,\cdot\}\) of rings and in the language
\(\Lval=\Lring\cup \{\O\}\) of valued rings, 
where $\O$ is a unary relation symbol interpreted
as the valuation ring. Given fields $K$ and $L$, we often write $K \equiv L$ as
a shorthand for $K\equiv_{\Lring}L$, and when $(K,v)$ and $(L,w)$ are valued fields 
we similarly write $(K,v) \equiv (L,w)$ for $(K,v)\equiv_{\Lval}(L,w)$.

We say that a valuation $v$ on $K$ is {\em definable} if $\mathcal{O}_v$ is a
definable subset of $K$ in the language $\mathcal{L}_{\rm ring}$ of rings, i.e.,
there is an $\Lring$-formula $\varphi(x,\vec{y})$ and a tuple $\vec{c} \in K^{|\vec{y}|}$ 
such that $\mathcal{O}_v = \varphi(K,\vec{c})$
in which case we also say that $\varphi(x,\vec{c})$ defines $v$.
If a valuation is $\emptyset$-definable, then the same formula defines a valuation $w$
in any $L \equiv_{\Lring} K$, and for any such $L$ and $w$ we have 
$(K,v) \equiv_{\Lval} (L,w)$. In particular, properties like henselianity carry over
from $(K,v)$ to $(L,w)$. See \cite{FehmJahnke} for a survey on definable henselian valuations.


\section{(Un)titling Definability}
In this section, we show our main result (\cref{main-thm}): if $(K,v)$ is a perfectoid field of
mixed characteristic $(0,p)$ and its tilt is $(K^\flat,v^\flat)$, then $\mathcal{O}_v$ is an $
\Lring$-definable subset of
$K$ if and only if $\mathcal{O}_{v^\flat}$ is an $
\Lring$-definable subset of
$K^\flat$. We also show that if $v$ (or equivalently $v^\flat$) is 
$\Lring$-definable, then no parameters are needed, i.e., it is $\emptyset$-definable.
Roughly speaking, a henselian valuation can be definable due to its value group,
its residue field or sometimes even using an extension with defect. 
Since the tilting equivalence
preserves value group, residue field and the existence of defect extensions, 
we prove that in each of these cases definability is preserved by (un)tilting. Moreover,
we show that for perfectoid fields, definability of a valuation necessarily 
stems from one of the aforementioned cases.

\subsection*{Definability via non-divisible value groups}

Let $(K,v)$ be a perfectoid field.
We start by asserting that
if  the value group $vK$ is not divisible, then $v$ is definable. 
This follows from a Theorem of Hong, see \cite{hong2014definable} (which in turn generalizes earlier work of Koenigsmann,
see \cite[Lemma 3.6]{Koehab}), which we state below. 
While the statement we give is more
precise than Hong's (who makes no statements about uniformity or quantifier complexity),
his proof yields what is stated below. We thus sketch the proof for the convenience
of the reader.
\begin{thm}[{see \cite[Theorem~4]{hong2014definable}}]
    \label{hong} Let $q$ be a prime.
    There are parameter-free $\Lring$-formulae $\varphi_q(x)$ and $\psi_q(x,y)$ such that
    \begin{enumerate}
  
        \item $\psi_q(x,y)$ is a universal formula and $\psi_q(x,b)$ defines the valuation ring $\mathcal{O}_v$ in any
        henselian valued field with $vK \leq \mathbb{R}$ non-discrete, where $b \in K$
        is such that $q \nmid v(b)$, and
        \item $\varphi_q(x)$ defines the valuation ring $\mathcal{O}_v$ in any
        henselian valued field with $vK \leq \mathbb{R}$ 
    and such that $vK$ is not $q$-divisible and non-discrete.
      \end{enumerate}
\end{thm}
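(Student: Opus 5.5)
The plan is the Koenigsmann--Hong trick: compare the set of $q$-th powers with the maximal ideal. For $q\neq\chara(Kv)$, Hensel's lemma gives $1+\m_v\subseteq (K^\times)^q$; for $q=\chara(Kv)$ one uses instead $1+q^2\m_v$ in mixed characteristic, or Artin--Schreier $\wp(\m_v)=\m_v$ in equal characteristic $q$. To keep one formula per prime, I would work with the set
\[
T=\{x : 1+x^q\cdot(\text{suitable element})\in (K^\times)^q\}
\]
or, more concretely, with the set $S_b=\{x\in K: \exists z\,(z^q = 1+b x^q)\}$ and variants thereof. Fix $b$ with $q\nmid v(b)$.

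\textbf{Step 1 (the universal formula $\psi_q$).} First I would show that $1+ b x^q$ is a $q$-th power whenever $v(bx^q)>\gamma_0$ for a suitable bound ($\gamma_0=0$ if $q\neq p$). Next, if $v(x)<0$ is very negative, then $v(1+bx^q)=v(b)+qv(x)$ is not divisible by $q$, so $1+bx^q$ is not a $q$-th power. Hence the set of $x$ with $\forall y\, (y^q\neq 1+bx^{q})$ picks out elements of negative value up to a bounded window. The idea is to take $\psi_q(x,b)$ to say ``for all $y$, $y^q\neq 1+b^{-1}\cdot(\text{expression in }x^{-1})$'' and massage it into exactly $\O_v$. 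I would use ring operations to handle the window: replace $x$ by $x^{N}$ or combine with multiplication by $b^{\pm1}$. Because $vK\le\R$ is non-discrete and of rank 1, the window can be absorbed. The key tool is that $x\in\O_v$ iff $x\cdot c\in \m_v$ for all $c$ of tiny positive value, and such $c$ can be quantified universally as $q$-th-power-type conditions. Since only universal quantifiers over $y$ appear, the result is universal in $x,b$.

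\textbf{Step 2 (parameter-free $\varphi_q$).} When $vK$ is not $q$-divisible, I would existentially quantify the parameter: $\varphi_q(x)$ states that there is $b$ such that $b$ is ``not a $q$-th power times a unit-like element'' and $\psi_q(x,b)$ holds. The danger is that a bad $b$, namely one with $q\mid v(b)$, makes $\psi_q(x,b)$ define something else. I would therefore define $\varphi_q(x)$ as $\forall b\,(\chi(b)\to\psi_q(x,b))$ or an intersection/union trick. Here $\chi(b)$ is a condition implying $q\nmid v(b)$ that is definable from $\psi_q(-,b)$ itself. For example, $\chi(b)$ could require that $\psi_q(-,b)$ defines a subring containing $b$-related elements and closed under inversion failure. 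This is checked via the fact that for $b$ with $q\mid v(b)$, the set defined by $\psi_q(x,b)$ is not a valuation ring, or is a different set. Failing that, I would use the standard fact that the only henselian rank-1 valuation ring is canonical, so any $\psi_q(-,b)$ that defines a valuation ring with the right property coincides with $\O_v$.

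\textbf{Main obstacle.} I expect the hard part to be the exact cutoff in Step 1 for $q=p=\chara(Kv)$ in mixed characteristic, where Hensel needs $v(\cdot)>2v(q)$. Non-discreteness is crucial for shifting values continuously to close the window. The trick I would try first is to replace $b$ by $b\cdot p^{k}$ or by $b^{m}$ with $\gcd(m,q)=1$ to move the cutoff, and then define $\O_v$ as a union or intersection of finitely many translates. I would also check uniformity across characteristics.
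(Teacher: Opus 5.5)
There is a genuine gap, and it sits exactly at the obstacle you flag. Step~1 never produces a formula, and your $q$-th power approach does not handle $q=\chara(Kv)$. Since $\psi_q$ must be a single formula valid in all residue characteristics at once, that case cannot be postponed.

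In characteristic $q$, your set $S_b=\{x:\exists z\,(z^q=1+bx^q)\}$ is just $\{0\}$: the equation $z^q=1+bx^q$ means $(z-1)^q=bx^q$, which forces $b\in K^q$ when $x\neq 0$. In mixed characteristic $(0,q)$, $S_b$ is only sandwiched between $\{x:v(bx^q)>\gamma_0\}$ and $\{x:v(bx^q)>0\}$ for some $\gamma_0>0$ (e.g.\ $\gamma_0=2v(q)$). Membership in the window $0<v(bx^q)\le\gamma_0$ is not determined by $v(x)$. Replacing $b$ by $bq^k$ or $b^m$ only moves this window without changing its width, and you give no argument that finitely many translates cut out exactly $\m_v$ or $\O_v$.

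Your ``key tool'' is circular as stated. To keep the result universal, ``$c$ has small positive value'' would have to be existentially definable and ``$xc\in\m_v$'' universally definable, which is what you are trying to construct. Step~2 is likewise only a list of options, and none of the proposed tests for a good $b$ is verified. The condition that $\psi_q(K,b)$ be a henselian valuation ring of rank~$1$ is not, in general, a first-order condition on $b$: henselianity is an infinite scheme and rank~$1$ is not elementary.

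The missing idea is to replace $q$-th powers by the image of $f(X)=X^q-X^{q-1}$. Its reduction $X^{q-1}(X-1)$ has the simple root $1$, so Hensel's lemma gives $\m_v\subseteq f(K)$ in \emph{every} residue characteristic. Meanwhile $v(u)<0$ forces $v(f(u))=qv(u)\in q\,vK$, so there is no window at all. For $q\neq\chara(Kv)$ your Kummer set $\{t:1+t\in(K^\times)^q\}$ has the same two properties, which is why your plan looks plausible there; but a single formula built from it breaks down when $q=\chara(Kv)$.

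The paper then takes the existential formula $\omega_q(x,b)$ expressing ``$x\in f(K)$ and $x=z\,f(y)$ for some $y$ and some $z\in f(K)\cap\{bd^q:d\in K\}$''. It shows this defines $\m_v$ whenever $q\nmid v(b)$. In that case the admissible $z$ are exactly the $bd^q$ of positive value, so their values avoid $q\,vK$, and density of $vK$ supplies one with $0<v(z)<v(x)$. Universality is then automatic, since $x\notin\O_v$ iff $\exists y\,(xy=1\wedge\omega_q(y,b))$.

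For parameter elimination, the point your sketch never isolates is monotonicity. The inclusion $\omega_q(K,b)\supseteq\m_v$ holds for \emph{every} $b\neq0$, since it does not use $q\nmid v(b)$, and equality holds when $q\nmid v(b)$. Hence $\forall b\,(b=0\vee\omega_q(x,b))$ defines $\m_v$. Non-$q$-divisibility of $vK$ is used only to guarantee that some $b$ attains equality.
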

\begin{proof}
For $b \in K$ with $q \nmid v(b)$, 
consider the $\Lring$-formula
\[\xi_q (z,b) \equiv \exists x \exists u (z = bx^q \wedge  u^q- u^{q-1} = z)\]
which uses $b$ as a parameter.
Note that for $z \in K$, we have $K \models \xi_q(z,b)$ 
if and only if $z = bx^q$
for some $x \in K$ and $v(z)>0$: If $z = bx^q$, then $q\nmid v(z)$ since $q \nmid v(b)$,
in particular, $v(z) \neq 0$.
If $v(z)>0$, then Hensel's Lemma gives us $u \in K$ with 
$u^q-u^{q-1}=z$. If $v(z)<0$ and $z = u^q-u^{q-1}$, then $v(u)<0$ and so $q \mid v( u^q-u^{q-1}) = v(z)$, a contradiction.

Now, consider the existential $\Lring$-formula
\[
\omega_q(x,b) \equiv \exists u \exists y  \exists z  (x = u^q-u^{q-1} \wedge \xi_q(z,b) \wedge z(y^q-y^{q-1})=x).
\]
We claim that $\omega_q(x,b)$ defines $\m_v$. 
Indeed, if $K\models \omega_q(x,b)$, suppose $v(x)\le 0$ and take $u,y,z \in K$ witnessing 
$K\models \omega_q(x,b)$. Then, we have $$v(x)= v(u^q-u^{q-1})=qv(u)=v(z)+v(y^q-y^{q-1}) \leq 0.$$ 
If $v(y)>0$, this gives $v(z)=qv(u)-(q-1)v(y)<0$, impossible; if $v(y)\le 0$, then $v(z)=qv(u)-qv(y)$, also impossible since $q\nmid v(z)$. 
Hence $v(a)>0$, so $\omega_q(K,b) \subseteq \m_v$. 

Conversely, let $a\in \m_v$. 
Choose $d \in K$ such that for $z\coloneqq bd^q$ and 
$c\coloneqq a/z$ we have $v(c)>0$ (this exists as $vK$ is dense in $\mathbb{R}$). 
The equation $y^q-y^{q-1}-c=0$ admits a root by Hensel's Lemma, and since $v(a)>0$, $u^q-u^{q-1}=a$ also has a root, so $a\in \omega_q(K,b)$. 
Note that the assumption $q \nmid b$ was in fact not required to show 
$\m_v \subseteq \omega_q(K,b)$, this holds for any $b\neq 0$.
Thus $\omega_q(K,b)=\m_v$. 

Since $\omega_q(x,b)$ is an existential $\Lring$-formula, and we have
$$x \in K\setminus \mathcal{O}_v \Longleftrightarrow \exists y (xy=1  \wedge \omega_q(y,b)),$$
also $K \setminus \mathcal{O}_v$ is definable by an existential 
$\Lring$-formula $\chi_q(x,b)$, and hence 
$\mathcal{O}_v$ is uniformly
definable by the universal $\Lring$-formula $\neg \chi_q(x,b)$.
This proves (i).

In order to eliminate the parameter $b$, consider for any $b\in K^\times$
the $\Lring$-formulae 
\(
\xi_{q}(z,b)
\)
and 
\(
\omega_q(x,b)
\)
as before.
We have verified above that if $q \nmid v(b)$, then $\omega_q(K,b)=\m_v$ and 
that $\omega_q(K,b) \supseteq \m_v$ holds for all $b \neq 0$. 
Thus, $\m_v$ is defined by
$$\chi_q(x)\equiv \forall {b} (b = 0 \vee \omega_q(x,b))$$
and 
hence $\mathcal{O}_v$ is also uniformly $\emptyset$-definable in $\mathcal{L}_{\mathrm{ring}}$ by the formula
$$\varphi_q(x) \equiv \neg \exists y (xy=1  \wedge \chi_q(y)),$$
 proving (ii).
\end{proof}

\subsection*{Definability via residue fields}
Akin for the non-divisibility of the value group in the previous section, we now 
discuss properties of the residue field which give rise to the definability of a 
henselian valuation. The key concept here is t-henselianity, first introduced by Prestel
and Ziegler in \cite{prestel-ziegler1978model}, and its variants. While the definition we 
give below is not the original definition by Prestel and Ziegler, these are
equivalent by \cite[Remark 7.11]{prestel-ziegler1978model} and 
\cite[p. 203]{Pralg}.
\begin{defi}
    \label{def:ttame}
    A field \(K\) is \emph{t-henselian} if there is \(L\equiv K\) such that $L$ admits a nontrivial 
    henselian valuation.
    A field \(K\) is \emph{t-henselian of divisible-tame type} if there is \(L\equiv K\) such that \(L\) admits a nontrivial tame valuation with divisible value group.
\end{defi}
There are t-henselian fields which are neither real nor separably closed and which do not admit a nontrivial henselian valuation \cite[p.~338]{prestel-ziegler1978model}.
The definition of being t-henselian of divisible tame-type stems from 
\cite[Definition 4.4]{anscombe2018henselianity}, where also non-henselian examples of such 
kind where constructed.

\begin{examp}
    \label{ex:alg-cl->t-hens-dtt}
    Every algebraically closed field is t-henselian of divisible-tame type.
    Indeed, if \(K\) is algebraically closed, then \(K\equiv K(\!(\Q)\!)\), and the Hahn series valuation on \(K(\!(\Q)\!)\) is tame with divisible value group.
\end{examp}

\begin{rem}
	Let \(K\) be t-henselian of divisible-tame type. Then there is an ultrafilter \(\U\) on some index set \(I\) such that the ultrapower \(K^\U\) admits a nontrivial tame valuation with divisible value group.
	
	Indeed, let \(L\equiv K\) such that \(L\) admits a nontrivial tame valuation \(w\) with divisible value group. 
	By the Keisler-Shelah isomorphism theorem, see \cref{thm:Keisler-Shelah}, there is an ultrafilter \(\U\) on some index set \(I\) such that \(L^\U\cong K^\U\). Since tameness is an $\Lval$-aximatizable property (\cite[Lemma 4.4]{kuhlmann2016algebra}),
	if we consider the ultrapower \((L^\U,w^\U)\coloneqq (L,w)^\U\) of the valued field \((L,w)\), the valuation \(w^\U\) on \(L^\U\cong K^\U\) will be nontrivial and tame with divisible value group by \Losthm{}, see \cref{thm:los}.
\end{rem}

\begin{lem}
	\label{lem:dtame-decompose}
	Let \((K,v)\) be a valued field with \(v=\barv\circ w\).
	Then \(v\) is tame with divisible value group if and only if both  \(\barv\) and \(w\) are tame with divisible value group.
\end{lem}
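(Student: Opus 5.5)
The plan is to split tameness with divisible value group into its components, namely henselianity, divisibility of the value group, perfection of the residue field and defectlessness, and to check each one separately along the decomposition $v=\barv\circ w$. Throughout, write $\Delta=\barv((Kw)\x)$. This is a convex subgroup of $vK$, and we have a short exact sequence of ordered groups $0\to\Delta\to vK\to wK\to 0$ together with $(Kw)\barv=Kv$.

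\emph{Henselianity.} This is the standard fact (see \cite[Corollary~4.1.4]{engler2005valued}) that $v$ is henselian if and only if both $w$ and $\barv$ are.

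\emph{Divisibility.} Suppose $vK$ is divisible. Then the quotient $wK$ is divisible. Moreover, if $\delta\in\Delta$ and $n\delta'=\delta$ with $\delta'\in vK$, then $\delta'$ lies between $0$ and $\delta$, so $\delta'\in\Delta$ by convexity. Conversely, an extension of a divisible group by a divisible group is divisible. Since divisibility implies $p$-divisibility, the $p$-divisibility clauses of tameness are then automatic for all three valuations.

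\emph{Perfection of residue fields.} The residue fields of $v$ and $\barv$ coincide, so this condition is the same for both. It remains to see that $Kw$ is perfect. If $\barv$ is trivial, then $Kw=Kv$. Otherwise, when $\barv$ is tame on $Kw$ with $\chara(Kw)=p$, the field $Kw$ is perfect. Indeed, if $a\in Kw\setminus (Kw)^p$, then $Kw(a^{1/p})/Kw$ has degree $p$. Its ramification index is $1$ because $\Delta$ is $p$-divisible, and its inertia degree is $1$ because $Kv$ is perfect. This gives defect, a contradiction. In the forward direction, the same argument has to be run starting from $v$, which is part of the defect step below.

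\emph{Defectlessness.} This is the main obstacle. Let $L/K$ be finite and write $e_\bullet,f_\bullet$ for ramification indices and inertia degrees of the unique prolongations. Then
\[ e_v=e_w e_{\barv},\qquad f_v=f_{\barv},\qquad e_{\barv}f_{\barv}\le f_w,\]
where $e_{\barv},f_{\barv}$ refer to the residue extension $Lw/Kw$. Hence
\[ e_vf_v=e_we_{\barv}f_{\barv}\le e_wf_w\le [L:K].\]
Consequently, equality for $v$ holds if and only if equality holds at both steps.

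From this, defectlessness of $w$ and $\barv$ gives defectlessness of $v$, which settles the backward direction. For the forward direction, defectlessness of $w$ follows at once. For $\barv$, I would lift every finite extension $E/Kw$ to a finite extension $L/K$ with $Lw=E$ and $[L:K]=[E:Kw]$. For separable $E$, henselianity of $w$ lets one lift a monic irreducible separable generating polynomial. For purely inseparable $E$, one must first show that $Kw$ is perfect using $v$. This is the delicate point, since $\chara K$ may be $0$ while $\chara Kw=p$. I would first try to deduce it from perfection of $Kv$ and $p$-divisibility of $vK$, via the inequality above applied to suitable lifted extensions. Failing that, I would cite Kuhlmann's result \cite{kuhlmann2016algebra} that a henselian $v=\barv\circ w$ is defectless if and only if $w$ and $\barv$ are, and that tameness of $v$ forces $Kw$ to be perfect.
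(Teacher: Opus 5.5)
\textbf{The forward direction is unfinished.} Most of your argument is sound, but you never show that $\barv$ is defectless when $v$ is. You make perfection of $Kw$ depend on that fact, and at the same time you expect that defectlessness of $\barv$ on purely inseparable $E/Kw$ needs perfection of $Kw$ first. That circle does not exist, because lifting residue extensions needs neither separability, henselianity nor perfection.

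Let $a$ have minimal polynomial $\bar g$ over $Kw$, let $g\in\O_w[X]$ be a monic lift, and let $\beta$ be any root of $g$. Then $\beta$ is integral over $\O_w$ and its residue is a root of $\bar g$. The fundamental inequality then gives $\deg\bar g\le[K(\beta)w:Kw]\le[K(\beta):K]\le\deg g$, so $K(\beta)w\cong Kw(a)$, also in mixed characteristic with $a$ inseparable. Alternatively, use that the residue field of the prolongation of $w$ to $K^{\mathrm{alg}}$ is an algebraic closure of $Kw$. Either way, every finite $E/Kw$ embeds over $Kw$ into $Lw_L$ for some finite $L/K$.

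Your identity $e_vf_v=e_we_{\barv}f_{\barv}\le e_wf_w\le[L:K]$ and defectlessness of $v$ then show that $Lw_L/Kw$ is defectless for $\barv$. Multiplicativity of $e$, $f$ and degrees in the tower $Kw\subseteq E\subseteq Lw_L$ passes this to $E/Kw$. Identifying $E$ with its image is harmless because $\barv$ is henselian, so its prolongations are unique. Now $\barv$ is henselian and defectless, with divisible value group and perfect residue field $Kv$, so it is tame. Your degree-$p$ argument then gives $Kw$ perfect, hence $w$ is tame. Your fallback citation is not needed.

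\textbf{Comparison with the paper.} The paper does no index computation. It quotes \cite{engler2005valued} and \cite{anscombe2024characterizing} for ``$v$ is henselian and defectless iff $w$ and $\barv$ are'', and treats divisibility exactly as you do. Having thereby obtained that $(Kw,\barv)$ is tame, it invokes ``tame fields are perfect'' \cite{kuhlmann2016algebra}, which is precisely your degree-$p$ argument, to get $Kw$ perfect.

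So, with the step above filled in, your proof is the paper's proof with both cited facts proved by hand, in the same order: perfection of $Kw$ comes last. Your version shows explicitly where divisibility of $\Delta$ and perfection of $Kv$ are used; the paper's is shorter.
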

\begin{proof}
	Note that \(v\) is henselian and defectless if and only if both  \(\barv\) and \(w\) are henselian and defectless, by \cite[Corollary~4.1.4]{engler2005valued} and \cite[Lemma~2.9]{anscombe2024characterizing}. 
	Also the divisibility of the value group is stable under composition as \(\barv(Kw)\leq_{\mathrm{conv}} Kv\) and \(wK=vK/\barv(Kw)\).
	
	For the rest of the proof we may thus assume that \(v\), \(\barv\) and \(w\) are henselian and defectless with divisible value groups and it remains to show that the residue field of \((K,v)\) is perfect if and only if the residue fields of \((K,w)\) and \((Kw,\barv)\) are.
	
	If \((Kw,\barv)\) and \((K,w)\) have perfect residue fields, then so does \((K,v)\) since \(Kv=(Kw)\barv\).
	Conversely, if \(Kv\) is perfect, then so is \((Kw)\barv=Kv\).
	Hence \((Kw,\barv)\) is tame, and tame fields are perfect, see \cite[Lemma~3.1]{kuhlmann2016algebra}.
	Thus \(Kw\) is perfect.
\end{proof}

We are going to use the following special case of Beth's Definability Theorem to prove a definability criterion.

\begin{fact}
	Let \(T\) be an $\Lval$-theory of valued fields. Then, the following are equivalent. \label{Beth}
	\begin{enumerate}
		\item For any \((F,u_1),(F,u_2)\models T\), we have \(\O_{u_1}=\O_{u_2}\).
		\item There is a parameter-free \(\Lring\)-formula \(\psi\) with \(\psi(F)=\O_u\) in any model \((F,u)\) of \(T\).
		In particular, \(v\) is \(\emptyset\)-definable.
	\end{enumerate}
\end{fact}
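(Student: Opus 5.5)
This is a special case of Beth's Definability Theorem, so the plan is to derive it from Beth (or from compactness directly) applied to the expansion of $\Lring$ by the unary predicate $\O$.

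For (2) $\Rightarrow$ (1), suppose $\psi$ is a parameter-free $\Lring$-formula with $\psi(F)=\O_u$ in every model $(F,u)$ of $T$. If $(F,u_1),(F,u_2)\models T$, then both $\O_{u_1}$ and $\O_{u_2}$ equal $\psi(F)$. This set depends only on the field $F$, so $\O_{u_1}=\O_{u_2}$.

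For (1) $\Rightarrow$ (2), I would first recast (1) as an implicit definition in the sense of Beth. Let $\O'$ be a fresh unary predicate. Let $T'$ be $T$ with $\O$ replaced by $\O'$. Consider the theory $T\cup T'$ in $\Lring\cup\{\O,\O'\}$. A model of $T\cup T'$ is a field $F$ together with two interpretations giving $(F,u_1)\models T$ and $(F,u_2)\models T$. By (1), these interpretations coincide. Hence
$$T\cup T'\models \forall x\,(\O(x)\leftrightarrow \O'(x)),$$
which says exactly that $\O$ is implicitly defined by $T$ over $\Lring$. Beth's Definability Theorem (see \cite{hodges1997shorter}) then gives a parameter-free $\Lring$-formula $\psi(x)$ with $T\models\forall x\,(\O(x)\leftrightarrow\psi(x))$. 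This is precisely (2). In particular, whenever $(K,v)\models T$, the formula $\psi$ defines $\O_v$ without parameters.

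If one prefers to avoid quoting Beth, the same result follows from Craig interpolation. By compactness there are finite conjunctions $\theta(\O)$ of sentences of $T$ and $\theta'(\O')$ of sentences of $T'$ with
$$\theta(\O)\wedge \O(c)\models \theta'(\O')\to\O'(c)$$
for a new constant $c$. An interpolant in the common language $\Lring\cup\{c\}$ has the form $\psi(c)$, and it yields both inclusions $\O\subseteq\psi$ and $\psi\subseteq\O$ modulo $T$.

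The only point needing care, and the main obstacle, is the correct formulation of the implicit definability hypothesis. Condition (1) quantifies over pairs of $T$-valuations on the same field $F$. These pairs are exactly the models of $T\cup T'$ over a common $\Lring$-reduct, so the translation above is faithful. Once this is in place, the rest is a direct citation.
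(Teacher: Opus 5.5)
Your proposal is correct and takes the same approach as the paper, which applies Beth's Definability Theorem with $\L=\Lring$, $\L^+=\Lval$ and the formula \enquote{$x\in\O$}. Your additional steps are all sound: the trivial direction, the $T\cup T'$ reformulation, and the Craig-interpolation sketch. The reformulation is not needed, though, because in the semantic form of Beth's theorem cited by the paper (Hodges, \emph{Model Theory}, Theorem 6.6.4), condition (1) is literally the hypothesis.
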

\begin{proof}
    This follows immediately from Beth's Definability Theorem ({\cite[Theorem~6.6.4]{hodges1993model}}) where we take \(\L=\Lring\), \(\L^+=\Lval=\Lring\cup\{\O\}\supseteq \L\) and \(\varphi(x)\) as the \(\Lval\)-formula \enquote{\(x\in\O\)}.
\end{proof}

We now apply the above criterion to obtain:
\begin{prop}
    \label{prop:defnotdtt}
    Let \((K,v)\) be tame with divisible value group, \(Kv\) not t-henselian of divisible-tame type. Then, \(v\) is $\emptyset$-definable.   
\end{prop}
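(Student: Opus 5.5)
We apply \cref{Beth} to the theory \(T\coloneqq\Th_{\Lval}(K,v)\). The plan is to show that any two valuations \(u_1,u_2\) on the same field \(F\) with \((F,u_1),(F,u_2)\models T\) have the same valuation ring. \cref{Beth} then yields a parameter-free \(\Lring\)-formula \(\psi\) with \(\psi(F)=\O_u\) for every model \((F,u)\) of \(T\). Applying this to \((K,v)\) itself gives \(\O_v=\psi(K)\), so \(v\) is \(\emptyset\)-definable.

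Fix such \((F,u_1),(F,u_2)\). Being tame, having divisible value group and the elementary theory of the residue field are all \(\Lval\)-elementary (tameness by \cite[Lemma 4.4]{kuhlmann2016algebra}). Hence both \(u_i\) are tame with divisible value group, and \(Fu_1\equiv Fu_2\equiv Kv\). Being t-henselian of divisible-tame type is invariant under \(\equiv\) by \cref{def:ttame}, so neither \(Fu_i\) is t-henselian of divisible-tame type. The main step is a comparability argument in the style of F.K.~Schmidt: two henselian valuations on a non-separably-closed field are comparable. We first dispose of the case \(F\) separably closed. Then \(F\) is algebraically closed, because tame fields are perfect by \cite[Lemma~3.1]{kuhlmann2016algebra}. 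Its residue fields are then algebraically closed, hence t-henselian of divisible-tame type by \cref{ex:alg-cl->t-hens-dtt}, a contradiction. So \(F\) is not separably closed, and \(u_1,u_2\) are comparable, say \(\O_{u_1}\subseteq\O_{u_2}\).

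Now write \(u_1=\barv\circ u_2\), where \(\barv\) is the induced valuation on \(Fu_2\). By \cref{lem:dtame-decompose}, \(\barv\) is tame with divisible value group. If \(\barv\) were nontrivial, then \(Fu_2\) itself would admit a nontrivial tame valuation with divisible value group. It would therefore be t-henselian of divisible-tame type, taking \(L=Fu_2\) in \cref{def:ttame}, which contradicts the previous paragraph. So \(\barv\) is trivial and \(\O_{u_1}=\O_{u_2}\), as required.

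The main obstacle is justifying the comparability of \(u_1\) and \(u_2\) in the required generality. If the available form of Schmidt's theorem needs care (for instance, for fields that are not separably closed but only have a trivial Galois group in some weaker sense), I would fall back on the fact that henselian valuations on a non-separably-closed field form a chain \cite{engler2005valued}. The separably closed case is already excluded above, using the residue field hypothesis together with \cref{ex:alg-cl->t-hens-dtt}.
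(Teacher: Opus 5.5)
You have the same architecture as the paper: Beth's criterion (\cref{Beth}) applied to $\Th_{\Lval}(K,v)$, then showing the induced valuation is trivial via \cref{lem:dtame-decompose} and the residue-field hypothesis. Your last paragraph is fine. The gap is the comparability step.

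The fact you invoke is false: it is not true that any two henselian valuations on a field that is not separably closed are comparable, nor that they form a chain. F.K.~Schmidt's theorem only gives that two nontrivial henselian valuations on such a field are \emph{dependent}, i.e.\ they have a common nontrivial coarsening. Here is a counterexample.
\begin{itemize}
\item Let $k=\mathbb{Q}^{\mathrm{alg}}$, and let $w_2,w_3$ be prolongations of the $2$-adic and $3$-adic valuations. They are independent, and henselian because $k$ is algebraically closed.
\item Let $K_0=k(\!(t)\!)$ with the $t$-adic valuation $v_t$.
\item Then $w_2\circ v_t$ and $w_3\circ v_t$ are henselian and incomparable ($1/3$ lies in the first valuation ring but not the second, and $1/2$ the other way round).
\item Yet $K_0$ is not separably closed, since $\sqrt{t}\notin K_0$.
\end{itemize}
So excluding the case ``$F$ separably closed'' does not by itself give comparability of $u_1$ and $u_2$.

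What you need is that one of the \emph{residue fields} is not separably closed; then \cite[Theorem~4.4.2]{engler2005valued} gives comparability. (Pass to the finest common coarsening and apply Schmidt to its residue field.) Your exclusion argument already contains this. Since $u_1$ is tame, $Fu_1\equiv Kv$ is perfect. If it were separably closed it would be algebraically closed, hence t-henselian of divisible-tame type by \cref{ex:alg-cl->t-hens-dtt}, a contradiction. This is exactly how the paper argues. With this replacement your proof goes through, and the detour through whether $F$ is separably closed becomes unnecessary.
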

\begin{proof}
Firstly, we note that the assumptions imply that \(Kv\) is not separably closed. 
Indeed, \(Kv\) is perfect as it is the residue field of the tame field \((K,v)\).
Now, if \(Kv\) were separably closed, it would be algebraically closed, so by \cref{ex:alg-cl->t-hens-dtt} t-henselian of divisible-tame type, contradicting the assumption.

We now show that for every field $F$ and every two valuations $u_1, u_2$ on $F$, if
\[
(F,u_1), (F,u_2) \models \Th(K,v),
\]
then
\[
\mathcal{O}_{u_1} = \mathcal{O}_{u_2}.
\]
As $(F,u_1) \models\Th(K,v)$, $(F,u_1)$ satisfies the same first-order properties as $(K,v)$. 
In particular, $u_1F$ is divisible, $Fu_1$ is not separably closed, and $Fu_1$ is not $t$-henselian of divisible-tame type\footnote{Note that while not being separably closed
and not being $t$-henselian of divisible tame type are 
not $\Lring$-axiomatizable, they are 
preserved under 
$\Lring$-elementary equivalence.}. 
The same holds for $(F,u_2)$. 
Hence $u_1$ and $u_2$ are comparable by \cite[Theorem 4.4.2]{engler2005valued}. 
Without loss of generality, assume that
\[
\mathcal{O}_{u_1} \supseteq \mathcal{O}_{u_2},
\]
i.e.,\ $u_1$ is a coarsening of $u_2$.
Then $u_2$ induces a valuation $\overline{u_2}$ on the residue field $Fu_1$, and we obtain

\begin{center}
	\begin{tikzcd}
		F \arrow[r, "u_1"] \arrow[rr, "u_2", bend left=27] & Fu_1 \arrow[r, "\overline{u_2}"] & Fu_2 
	\end{tikzcd}
\end{center}

Since $(F,u_2) \models \Th(K,v)$, the valuation $u_2$ is tame with divisible value group. 
By \cref{lem:dtame-decompose}, $\overline{u_2}$ is tame with divisible value group as well. 
Moreover, $Fu_1$ is not $t$-henselian of divisible-tame type, so $\overline{u_2}$ must be trivial. Therefore, we conclude $\mathcal{O}_{u_1} = \mathcal{O}_{u_2}$. By \cref{Beth}, it follows that $v$ is $\emptyset$-definable.
\end{proof}

\subsection*{Definability via defect}
Defining valuation rings via properties of the residue field and value group is
a standard technique, initiated in the work of 
Julia Robinson and extensively studied ever since.
This reflects the Ax--Kochen/Ershov philosophy: in sufficiently nice settings 
(e.g., in equicharacteristic $0$), the first-order theory of a valued field is completely
determined by those of its residue field and value group. However, building on recent
work of Kuhlmann and Rzepka, Ketelsen, Ramello and Szewczyk were able to use Galois
extensions of independent defect-type to define henselian valuations (see \cite{ketelsen2024definable} for more details). In this section,
we refine their result to show that in the rank-1 case, these 
definitions can be done without parameters.
In \cite[Corollary 4.14]{ketelsen2024definable}, the authors show that in any henselian valued field $(K,v)$ with $\chara{Kv} = p > 0$ such that \((K,v)\) is not defectless, and such that either
	\begin{enumerate}
		\item[(i)] $K$ is perfect, if $\chara{K} = p > 0$, or
		\item[(ii)] $\O_v/p$ is semi-perfect, if $\chara{K} = 0$
	\end{enumerate}
    holds, some nontrivial coarsening of $v$ is definable, possibly with parameters.
    Since the only defect occuring in
perfectoid fields is of independent-defect type by \cite[Theorem 1.10]{kuhlmann-rzepka2023valuation},
this gives rise to definability of non-defectless perfectoid valuations:

\begin{kor}
    \label{def-from-defect}
	Let \((K,v)\) be a perfectoid field which is not defectless. 
	Then \(v\) is definable (possibly with parameters).
\end{kor}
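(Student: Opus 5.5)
The plan is to combine \cite[Corollary 4.14]{ketelsen2024definable} with the fact that a perfectoid valuation has rank $1$. We apply their result to $(K,v)$, which gives a nontrivial coarsening $w$ of $v$ that is $\Lring$-definable, possibly with parameters. Since $vK\leq\R$ has rank $1$, the only coarsenings of $v$ are $v$ itself and the trivial valuation. These correspond to the convex subgroups of $vK$, and a rank-$1$ group has only $\{0\}$ and $vK$. So $w$ is nontrivial forces $\O_w=\O_v$, and then $v$ is definable.

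It remains to check the hypotheses of \cite[Corollary 4.14]{ketelsen2024definable}. First, $(K,v)$ is henselian, since it is complete of rank $1$. Second, $\chara(Kv)=p>0$ holds by the definition of perfectoid. Third, $(K,v)$ is not defectless by assumption.

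For the remaining hypothesis we split by characteristic. If $\chara(K)=0$, then $\O_v/p$ is semi-perfect, which is part of the definition of perfectoid, so condition (ii) holds. If $\chara(K)=p$, we need $K$ to be perfect. By definition $\O_v/p\O_v=\O_v/0=\O_v$ is semi-perfect, so Frobenius is surjective on $\O_v$. Every element of $K$ is a quotient of elements of $\O_v$, hence a $p$-th power, so $K$ is perfect and condition (i) holds.

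The only step needing care is the structural remark that coarsenings correspond to convex subgroups of $vK$. This is standard and I would cite \cite[Section~2.3]{engler2005valued} for it. No parameter elimination is attempted at this stage; that is deferred to the later argument.
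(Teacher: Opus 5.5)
Your proof is correct and follows essentially the same route as the paper: you invoke Corollary~4.14 of Ketelsen--Ramello--Szewczyk to obtain a nontrivial definable coarsening, and then use that a rank-$1$ valuation has no proper nontrivial coarsenings. You also explicitly check that corollary's hypotheses (henselianity from completeness, and perfectness of $K$ in characteristic $p$ via $\O_v/p\O_v=\O_v$), which the paper leaves implicit.
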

\begin{proof}
	By \cite[Corollary~4.14]{ketelsen2024definable}, \(v\) has a nontrivial coarsening \(w\) which is definable.
	Since \((K,v)\) is perfectoid, \(v\) is a rank-1-valuation. 
	Thus, there are no proper nontrivial coarsenings of \(v\), so \(w=v\).
\end{proof}

Moreover, Ketelsen, Ramello and Szewczyk show that if $(K,v)$ has mixed characteristic
and satifies the assumptions of \cite[Corollary 4.14]{ketelsen2024definable}, \emph{some} henselian valuations on $K$
is already $\emptyset$-definable \cite[Corollary 4.16]{ketelsen2024definable} (although this is not necessarily a coarsening of $v$), and that in general, parameters are needed
\cite[Proposition 6.14]{ketelsen2024definable}. 

We now want to show a parameter-free version of \cref{def-from-defect}. 
We start by giving a slightly more verbose 
version of {\cite[Theorem~4.11]{ketelsen2024definable}}.

\begin{thm}[see {\cite[Theorem~4.11]{ketelsen2024definable}}]
    \label{thm:defect}
	Let $(K,v)$ be a henselian valued field with $\chara{Kv} = p > 0$ such that \((K,v)\) admits a Galois extension $L$ of degree $p$ with independent defect.
	If \(\chara(K)=0\), we additionally assume that \(\zeta_p\in K\), where \(\zeta_p\) is a primitive \(p\)-th root of unity. Then $K$ admits a nontrivial definable henselian valuation $w$, coarsening $v$. In fact, $w$ can be defined by a 
    formula $\varphi_K(x, c_0,\dots,c_{p-1})$, where $L=K(\theta)$ and the minimal polynomial
    of $\theta$ over $K$ is $X^p + \sum_{i=0}^{p-1}c_iX^i$.
\end{thm}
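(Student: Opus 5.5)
The plan is to follow the proof of \cite[Theorem~4.11]{ketelsen2024definable} and keep track of which parameters the defining formula actually uses. First I normalise the defect extension. Since $L/K$ is Galois of degree $p$, it is Artin--Schreier, $L=K(\theta)$ with $\theta^p-\theta=a$, when $\chara(K)=p$. When $\chara(K)=0$, Kummer theory together with $\zeta_p\in K$ gives $L=K(\theta)$ with $\theta^p=a$. In either case the minimal polynomial $X^p+\sum_i c_iX^i$ of a chosen generator has coefficients $c_i$ that are polynomial in the defining data, and conversely the data can be recovered from the $c_i$. For a general generator $\theta$, the coefficients $c_i$ determine $L$ as the splitting field of this polynomial. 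The idea is that every first-order quantity needed to talk about $L$ inside $K$ is expressible from $(c_0,\dots,c_{p-1})$.

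Next I interpret $L$ in $K$ using these parameters. I view $L\cong K^p$ with multiplication given by reduction modulo $X^p+\sum c_iX^i$, and write the norm map $N_{L/K}$ as a polynomial in the coordinates with coefficients polynomial in the $c_i$. The key set is then
\[
S=\{\, N_{L/K}(y)-1 : y\in L\,\}
\]
or a close variant, chosen as in \cite{ketelsen2024definable}. Its definability follows the Kuhlmann--Rzepka characterisation of independent defect: the distances $\operatorname{dist}(\theta,K)$ determine a convex subgroup and a ramification ideal. One then shows that the ring generated by a suitable $\exists$-definable subset of this form, for instance $\{x : x\,S\subseteq S\}$, is the valuation ring $\O_w$ of the coarsening $w$ associated to the ramification ideal of $L/K$. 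Henselianity and nontriviality of $w$ are inherited from $v$, because $w$ is a coarsening of $v$ with nontrivial residue characteristic data.

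The final step is bookkeeping: I check that every occurrence of the element $a$ or of $\theta$ in the original formula is replaced by an expression in the $c_i$. This produces a formula $\varphi_K(x,c_0,\dots,c_{p-1})$, and its shape depends on $K$ only through $p$ and the characteristic. The main obstacle is the middle step, namely verifying that the norm-group or ramification-ideal description really cuts out exactly $\O_w$ rather than some other ring between $\O_v$ and $K$. I would rely on \cite{kuhlmann-rzepka2023valuation} for the computation of $\{v(N(y)-1)\}$ in independent defect extensions, and cite the corresponding lemma of \cite{ketelsen2024definable} rather than redo it.
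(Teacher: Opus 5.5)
Your plan matches the paper's: the statement is just \cite[Theorem~4.11]{ketelsen2024definable}, plus the observation, explicit in the last line of that proof, that once $L\cong K[X]/(X^p+\sum_i c_iX^i)$ is interpreted in $K$ the defining formula needs no parameters beyond $c_0,\dots,c_{p-1}$. Your reconstruction of the middle step (the norm set $S$ and its stabiliser ring) is speculative and unnecessary, and nontriviality of $w$ does not follow from $w$ being a coarsening of $v$, since coarsenings can be trivial; it is part of the cited conclusion, so simply cite it.
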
    
\begin{proof}
    Apart from the last sentence, this is exactly the content of \cite[Theorem~4.11]{ketelsen2024definable}. The proof of \cite[Theorem~4.11]{ketelsen2024definable} 
    shows that $w$ is definable by an $\Lring$-formula which uses only $\vec{c}$ as
    parameters (this is made explicit in the last line of the proof).
\end{proof}

In case $vK \leq \mathbb{R}$, we can now eliminate the parameters in 
a similar fashion to the argument \cite[Lemma 3.6]{Koehab} (and 
somewhat similar to those in \cite{Ax} and \cref{hong}):
\begin{thm}
	Let \((K,v)\) be henselian valued field which is not defectless such that \(vK\leq \R\). 
    Assume further that \(\chara(Kv)=p>0\) and that \label{defect-def}
    	\begin{enumerate}
		\item[(i)] $K$ is perfect, if $\chara{K} = p > 0$, or
		\item[(ii)] $\O_v/p$ is semi-perfect, if $\chara{K} = 0$ and
        $\chara(Kv)=p$
	\end{enumerate}
    holds.
	Then \(v\) is \(\emptyset\)-definable. 
    In particular, if $(K,v)$ is perfectoid and $v$ is not defectless then $v$ is 
    $\emptyset$-definable.
\end{thm}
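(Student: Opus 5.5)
Since $(K,v)$ is not defectless and satisfies (i) or (ii), the argument behind \cite[Corollary~4.14]{ketelsen2024definable} (via Kuhlmann--Rzepka) provides a Galois extension $L/K$ of degree $p$ with independent defect. We may have to pass to $K(\zeta_p)$ first. This extension has degree prime to $p$, hence is defectless, so defect is preserved; and a valuation on $K$ is definable from its unique prolongation to $K(\zeta_p)$ by interpreting $K(\zeta_p)$ in $K$. With this reduction in place, \cref{thm:defect} gives an $\Lring$-formula $\varphi(x,\vec{y})$ such that $\varphi(K,\vec{c})$ is the valuation ring of a nontrivial henselian coarsening $w$ of $v$. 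Here $\vec{c}=(c_0,\dots,c_{p-1})$ are the coefficients of the minimal polynomial of a generator $\theta$ of $L$. Because $vK\leq\R$, $v$ has rank $1$ and so $w=v$. Thus $v$ is definable from $\vec{c}$, and the only task left is to remove the parameters $\vec{c}$.

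To remove $\vec{c}$ I would follow the pattern of \cref{hong} and \cite[Lemma~3.6]{Koehab}: quantify over all parameter tuples, keeping only those that behave well. I would let $\theta(\vec{y})$ be the $\Lring$-formula expressing that $\varphi(K,\vec{y})$ is the valuation ring of a nontrivial valuation on $K$ that is not the whole field. Being a valuation ring is first-order: the set is closed under $+$ and $\cdot$, contains $1$, and for every $x\neq 0$ contains $x$ or $x^{-1}$. Nontriviality just says some element lies outside the set. One could also add henselianity of that ring, which is first-order via the axiom scheme for each degree, though only a bounded piece of it is needed. The key observation is that every nontrivial valuation ring $\O$ on $K$ that is henselian (or, more weakly, comparable to $\O_v$) is either equal to $\O_v$ or equal to $K$, since $v$ has rank $1$. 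Henselian valuations are comparable to $v$ as long as $Kv$ is not separably closed. In the defect case, however, $Kv$ may well be separably closed, so comparability has to be obtained differently.

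The main obstacle is therefore comparability. To get it, I would choose the family so that every member is automatically comparable with $v$. First attempt: take the intersection formula
\[\psi(x)\equiv \forall \vec{y}\,\bigl(\theta(\vec{y})\to\varphi(x,\vec{y})\bigr),\]
and check that every $\varphi(K,\vec{y})$ satisfying $\theta$ contains $\O_v$. If the proof of \cite[Theorem~4.11]{ketelsen2024definable} defines $w$ as the coarsest coarsening of $v$ with a certain property, built from an ideal of $\O_v$ (for instance, using the Artin--Schreier or Kummer generator and the distance of $\theta$ from $K$), then for any other $\vec{y}$ yielding a henselian ring the construction still produces a ring containing $\O_v$, or at worst containing the ring attached to $v$. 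If this holds, $\psi$ defines exactly $\O_v$. This is because the genuine $\vec{c}$ contributes $\O_v$ itself, and every other admissible tuple contributes either $\O_v$ or a proper overring, which by rank $1$ is $K$ and is excluded by $\theta$.

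If that inspection fails, my fallback is to define $\m_v$ as in \cref{hong}. I would express $x\in\m_v$ as $\forall\vec{y}\,(\theta(\vec{y})\to x$ lies in the maximal ideal of $\varphi(K,\vec{y}))$. Then I would use the standard fact that two henselian valuations on a field which are not separably closed are comparable (\cite[Theorem~4.4.2]{engler2005valued}), applying it to the henselian rings cut out by $\theta$. This needs $K$ itself to be non-separably closed, which is true because $L/K$ is a proper extension. By rank $1$ of $v$, each of these rings then equals $\O_v$ or is a proper coarsening, so it equals $\O_v$. Finally, the perfectoid clause of the theorem follows directly, since perfectoid fields satisfy the hypotheses: the value group has rank $1$, and condition (i) or (ii) holds by definition.
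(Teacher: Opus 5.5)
\textbf{There is a genuine gap in the parameter elimination.} Your plan has the right shape: quantify over parameter tuples, keep the good ones, use rank $1$. But the step that carries all the weight is never supplied. That step is a \emph{single first-order} condition on the tuple which forces the candidate ring $\varphi(K,\vec{y})$ to be comparable with $\O_v$.

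Your first attempt depends on unverified internals of the formula from \cite[Theorem~4.11]{ketelsen2024definable}. It also points the wrong way. A nontrivial valuation ring comparable with the rank-$1$ ring $\O_v$ is either $\O_v$ or a proper \emph{refinement} of it, not a coarsening as you write. So an intersection of candidate rings can drop below $\O_v$. You must instead take the union of the rings, or, as in your fallback, intersect the maximal ideals.

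Your fallback asks $\theta(\vec{y})$ to say that $\varphi(K,\vec{y})$ is henselian. Henselianity is an infinite axiom scheme, not a formula in $\vec{y}$, and the ``bounded piece'' you allude to is never identified. The comparability fact is also misquoted. \cite[Theorem~4.4.2]{engler2005valued} needs a residue field that is not separably closed; this is how the paper uses it in \cref{prop:defnotdtt}, and you noted yourself that this may fail here. Non-separable-closedness of $K$ alone does not suffice in general. For example, on $k(\!(t)\!)$ with $k$ algebraically closed of positive transcendence degree, composing the $t$-adic valuation with two incomparable valuations of $k$ gives two incomparable henselian valuations.

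The idea you are missing is the paper's use of the canonical $p$-henselian valuation $v_F^p$. By \cite[Main Theorem]{JK15}, it is defined uniformly by a parameter-free formula $\psi_p$. If $F\neq F(p)$ and $v_F$ has rank $1$, then $\psi_p(F)\subseteq\O_{v_F}$. The paper therefore requires that $\varphi(F,\vec{d})$ be a valuation ring with $\psi_p(F)\subseteq\varphi(F,\vec{d})\neq F$. This makes each candidate a coarsening of $v_F^p$, hence comparable with $\O_{v_F}$ and therefore contained in it. Then $\O_v$ is the union of these rings intersected with $K$.

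A bounded piece that would also work in your framework is $p$-henselianity. It is expressible by one formula if $\chara F=p$ or $\zeta_p\in F$. A nontrivial $p$-henselian valuation incomparable with the rank-$1$ valuation $v_F$ would be independent of it. The $p$-henselian analogue of F.K.~Schmidt's theorem would then force $F=F(p)$, a contradiction.

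\textbf{The opening reduction is also too optimistic.} Non-defectlessness does not give a degree-$p$ Galois defect extension of $K$ itself, or of $K(\zeta_p)$. The standard argument \cite[Lemma~4.12]{ketelsen2024definable} only produces one over some finite extension $K'$ of $K$, whose degree may be divisible by $p$. So the tuple $\vec{c}$ lives in $K'$. Moreover, $K'$ is interpretable in $K$ only using further parameters (the coefficients of a minimal polynomial), and these must be eliminated as well. The paper handles this by quantifying uniformly over all extensions $F/K$ of degree $n=[K':K]$ with $F\neq F(p)$, given by coefficients of irreducible monic polynomials, together with all $\vec{d}\in F^p$. This is also why comparability has to be secured in every such $F$, not just in $K$.
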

\begin{proof}
Let \((K,v)\) be henselian valued field which satisfies the assumptions of the theorem.
If $Kv=Kv^\mathrm{alg}$, a nontrivial henselian valuation $w$ on $K$ 
is $\emptyset$-definable by
\cite[Theorem 3.10]{JK14a}. That $w$ is in fact a coarsening of $v$ is argued in
\cite[Remark 4.10]{ketelsen2024definable}. As $v$ has no nontrivial coarsenings, we 
conclude that in this case $v$ is $\emptyset$-definable. 

If $Kv$ is not algebraically closed, we seek to apply \cref{thm:defect}, 
similar in spirit to the proof of \cite[Corollary 4.14]{ketelsen2024definable}.
Note that as $Kv$ is not algebraically closed and has positive characteristic, 
the same holds
for the residue field of the prolongation of $v$ to any finite extension of $K$
\cite[Theorem 4.3.5]{engler2005valued}.
By a standard argument (see \cite[Lemma 4.12]{ketelsen2024definable}), $K$ has a finite extension $K'$ such that $(K',v_{K'})$ admits an immediate Galois defect extension $K'(\theta)$ of degree $p$ (where $v_{K'}$ denotes the unique extension of $v$ to $K'$). In case $\chara{K}=0$, we may further
assume $\zeta_p \in K'$ by replacing $K'$ with $K'(\zeta_p)$ if necessary.
In case $\chara{K}=0$, and $p=2$, we moreover replace $K'$ with $K'(\sqrt{-1})$.
Let $f(X) = X^p + \sum_{i=0}^{p-1}c_i X^i \in K'[X]$ be the minimal polynomial of $\theta$
over $K'$.

We now fix $n:=[K':K]$. Note that since $K'$ admits Galois extensions of degree $p$ (we use the shorthand notation $K'\neq K'(p)$ to denote this) and
is $v_{K'}$ is henselian, the valuation $v_{K'}$ is a nontrivial $p$-henselian 
valuation\footnote{A valuation is called $p$-henselian if it extends uniquely to every 
Galois extension of degree $p$, see \cite{Koe95}.}
on $K'$. By \cite[Main Theorem]{JK15}, $K'$ admits a nontrivial $\emptyset$-definable $p$-henselian 
valuation $v_{K'}^p$ which is comparable to every $p$-henselian valuation on $K'$, the so-called
canonical $p$-henselian valuation. 
In fact, 
\cite[Main Theorem]{JK15} asserts that there
is a parameter-free $\Lring$-formula $\psi_p(x)$ which defines the canonical
$p$-henselian valuation $v_F^p$ in any field $F$, assuming $\zeta_p \in F$ if $\chara{F}=0$ and $p\neq 2$ or $\sqrt{-1} \in F$ if $\chara{F}=\chara{Fv_F^p}=0$ and $p=2$.\\

\begin{claim}
    If $F\neq F(p)$ and $u$ is a henselian valuation on 
$F$ with $uF \leq \mathbb{R}$ then $\mathcal{O}_{v_F^p} \subseteq \mathcal{O}_u$. 
\end{claim}
\begin{proof}[Proof of claim]
    As $v_F^p$ is the canonical $p$-henselian valuation, it is comparable to any $p$-henselian valuation on $F$. 
    If $u$ is trivial, we get $\mathcal{O}_{v_F^p} \subseteq \mathcal{O}_u=F$. 
    Otherwise, as $F\neq F(p)$ and $F$ admits a nontrivial $p$-henselian valuation 
    (namely $u$), $v_F^p$ is also nontrivial. As $uF\leq \mathbb{R}$, $u$ has no 
    nontrivial coarsenings, so we conclude $\mathcal{O}_{v_F^p} \subseteq \mathcal{O}_u$.
\end{proof}
In particular, for $F=K'$ we see that $v_{K'}$ is a coarsening of $v_{K'}^p$.
We now combine $n$, $\psi_p(x)$ and $\varphi_{K'}(x, \vec{y})$ (the latter 
from \cref{thm:defect}) to define $v$ on $K$ 
without parameters. 

We first treat the case $\chara{K}=p$, as it is easier notation-wise not to work
in characteristic $0$ and positive characteristic simultaneously.
Consider the class of fields with a named $p$-tuple
\begin{align*}\mathcal{F}:=\{(F, \vec{d}): \ & [F:K]=n,F\neq F(p), \vec{d}=(d_0,\dotsc,d_{p-1})\in F^p, \\
&\varphi_{K'}(F, \vec{d}) \textrm{ is a valuation ring with } \psi_p(F) \subseteq \varphi_{K'}(F, \vec{d})\neq F \}.
\end{align*}
The family $\mathcal{F}$ is uniformly interpretable in $K$: we quantify over those $n$-tuples from $K$ which are the coefficients of irreducible monic degree $n$ polynomials over $K$, since
such polynomials generate extensions degree $n$ extensions $F/K$ -- this allows us
to uniformly interpret the class $\mathcal{F}_n$ of degree $n$ extensions of $K$ (see \cite[Section 1.2.5]{Chatzidakis2026} for details).
Now, we can moreover uniformly
interpret the class of those $F \in \mathcal{F}_n$ (and tuples $\vec{d} \in F$) 
with certain
$\Lring(K)$-definable properties. This means we can single out those 
$F \in \mathcal{F}_n$ with $F \neq F(p)$ as those are exactly the ones admitting some
Artin-Schreier extension, and those
$\vec{d} \in F$ such that $(F, \vec{d})$ belongs to $\mathcal{F}$.

By the claim above, for any $(F,\vec{d}) \in \mathcal{F}$, $\psi_p(x)$ defines a refinement $v_F^p$
of the unique
prolongation $v_F$ of $v$ to $F$ (since $vK \leq \mathbb{R}$ implies $v_FF\leq 
\mathbb{R}$).
As the valuation rings of coarsenings of $v_F^p$ 
are linearly ordered by inclusion, the valuation rings $\varphi_{K'}(F, \vec{d})$
and that of $v_F$ are comparable. Since $(F, \vec{d}) \in \mathcal{F}$ stipulates 
$\varphi_{K'}(F, \vec{d}) \neq F$, we necessarily have that  $\varphi_{K'}(F, \vec{d})
\subseteq \mathcal{O}_{v_F}$ holds. 

Recall that $K'(\theta)/K'$ is an independent defect extension, 
and the minimal polynomial of $\theta$ is 
$X^p + \sum_{i=0}^{p-1}c_i X^i$. By \cref{thm:defect}, we
have for $\vec{c} = (c_0,\dots,c_{p-1})$ that $\varphi_{K'}(x,\vec{c})$ defines
$v_{K'}$. 
In particular, we have
$(K',\vec{c}) \in \mathcal{F}$.
Thus, we conclude
 $$\mathcal{O}_v = \bigcup_{(F, \vec{d}) \in \mathcal{F}} \varphi_{K'}(F, \vec{d}) \cap K$$
where -- due to the uniform interpretation of $\mathcal{F}$ -- 
the right hand side can be expressed a parameter-free $\Lring$-formula.

For $\chara{K}=0$ and $\chara{Kv}=p$, essentially the same argument works, 
after requiring that fields in 
$\mathcal{F}$
contain $\zeta_p$ 
and replacing Artin Schreier extensions by Kummer extensions.
 \end{proof}

\subsection*{Non-definability}
We now show that if a perfectoid valuation is not already definable due to \cref{hong}, \cref{prop:defnotdtt}, or \cref{defect-def}, it is in fact \emph{not} $\Lring$-definable. In order to prove this, we 
need another statement from \cite{ketelsen2024definable}.
This fact in turn relies on quantifier elimination in divisible ordered abelian groups and stable embeddedness of the value group in tame valued fields.

\begin{fact}[{see \cite[Lemma~3.4]{ketelsen2024definable}}]
	\label{fact:dt-no-proper-coars-defble}
	Let \((K,v)\) be tame with divisible value group.
	No proper coarsening of \(v\) can be \(\Lval\)-definable in \((K,v)\).
	In particular, no proper coarsening of \(v\) can be \(\Lring\)-definable in \(K\).
\end{fact}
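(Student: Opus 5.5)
We argue by contradiction, transferring the problem to the value group. Let \(w\) be a coarsening of \(v\) with \(\O_v\subsetneq \O_w\subsetneq K\), and suppose \(\O_w=\varphi(K,\vec{c})\) for some \(\Lval\)-formula \(\varphi\) and parameters \(\vec{c}\in K\). Coarsenings of \(v\) correspond bijectively to convex subgroups of \(vK\). Here \(\Delta\coloneqq v(\O_w\x)\) is the convex subgroup belonging to \(w\), and \(\O_w=\{x\in K : x=0 \text{ or } v(x)\in\Delta \text{ or } v(x)>\Delta\}\). Since \(w\) is a proper nontrivial coarsening, \(\Delta\) is a convex subgroup with \(\{0\}\neq\Delta\neq vK\). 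We have \(\Delta=\{v(x): x\in\O_w,\ x^{-1}\in\O_w\}\), the image under \(v\) of a set definable in \((K,v)\). Hence \(\Delta\) is a subset of the imaginary sort \(vK=K\x/\O_v\x\) that is definable in \((K,v)\) with parameters \(\vec{c}\) from the field sort.

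The key step, and the one I expect to be the main obstacle since it is where tameness enters, is to replace this by a definition inside the pure ordered abelian group \(vK\). For this I would invoke stable embeddedness of the value group in tame valued fields: every subset of \(vK^n\) definable in \((K,v)\) with parameters from \(K\) is definable in the ordered abelian group \((vK,+,<)\) with parameters from \(vK\). This follows from the relative quantifier elimination for tame fields (Kuhlmann) and is recorded, e.g., by Jahnke--Simon. Hence \(\Delta\) is definable in \((vK,+,<)\) with parameters.

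Since \(vK\) is divisible, \(\Th(vK)\) is the theory of divisible ordered abelian groups. This theory has quantifier elimination and is o-minimal. Therefore \(\Delta\) is a finite union of points and intervals whose endpoints lie in \(vK\cup\{\pm\infty\}\). Being convex, \(\Delta\) is a single interval, and it is symmetric because it is a subgroup. Since \(\Delta\ne\{0\}\), its supremum \(\gamma\) is positive. If \(\gamma\in vK\), divisibility lets us pick \(\delta\in\Delta\) with \(\delta>\gamma/2\) (as \(\gamma/2<\gamma=\sup\Delta\)). Then \(2\delta\in\Delta\) and \(2\delta>\gamma\), a contradiction. Hence \(\sup\Delta=+\infty\), and by symmetry \(\Delta=vK\). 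This contradicts \(\Delta\ne vK\), which proves the first assertion.

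The second assertion is immediate: every \(\Lring\)-formula is an \(\Lval\)-formula, so an \(\Lring\)-definable subset of \(K\) is \(\Lval\)-definable in \((K,v)\).
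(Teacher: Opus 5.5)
Your proof is correct and takes essentially the route the paper indicates for this cited fact (\cite[Lemma~3.4]{ketelsen2024definable}). Stable embeddedness of the value group in tame fields makes the convex subgroup \(v(\O_w\x)\) definable in the pure divisible ordered abelian group \(vK\), and quantifier elimination (o-minimality) there rules out every convex subgroup other than \(\{0\}\) and \(vK\); your reading of ``proper coarsening'' as \(\O_v\subsetneq\O_w\subsetneq K\) is also the one the paper uses in \cref{prop:notdef}.
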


As a consequence, we obtain the following non-definability result:
\begin{prop}
	\label{prop:notdef}
	Let \((K,v)\) be tame with divisible value group such that \(Kv\) is t-henselian of divisible-tame type. 
	Then, \(v\) is not definable (even with parameters). 
    \end{prop}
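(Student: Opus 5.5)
We argue by contradiction. Suppose \(\O_v=\varphi(K,\vec{c})\) for some \(\Lring\)-formula \(\varphi(x,\vec{y})\) and parameters \(\vec{c}\in K\). The plan is to pass to an ultrapower of \((K,v)\) in which the residue field carries a nontrivial tame valuation with divisible value group. The composite of this valuation with \(v\) will be a proper refinement \(u\) of \(v\) that is still tame with divisible value group. In the new structure, the formula \(\varphi\) will define a proper coarsening of \(u\), which \cref{fact:dt-no-proper-coars-defble} forbids.

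\textbf{Step 1 (choosing the ultrapower).} Since \(Kv\) is t-henselian of divisible-tame type, the remark following \cref{ex:alg-cl->t-hens-dtt} gives an ultrafilter \(\U\) such that \((Kv)^\U\) admits a nontrivial tame valuation \(\bar u\) with divisible value group. Take the valued-field ultrapower \((K^\U,v^\U)=(K,v)^\U\) over the same index set and ultrafilter. Its residue field is canonically \((Kv)^\U\), because the residue field is interpretable in \((K,v)\) and ultrapowers commute with interpretations. By \Losthm{}, \((K^\U,v^\U)\) is again tame with divisible value group, since tameness and divisibility are \(\Lval\)-axiomatizable. Also by \Losthm{}, \(\varphi(x,\iota(\vec{c}))\) defines \(\O_{v^\U}\) in \(K^\U\), as the sentence \(\forall x(\varphi(x,\vec{c})\leftrightarrow x\in\O)\) transfers.

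\textbf{Step 2 (composing).} Define \(u:=\bar u\circ v^\U\) on \(K^\U\). By \cref{lem:dtame-decompose}, \(u\) is tame with divisible value group, because both \(v^\U\) and \(\bar u\) are. Since \(\bar u\) is nontrivial, \(\O_u\subsetneq\O_{v^\U}\), so \(v^\U\) is a proper coarsening of \(u\).

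\textbf{Step 3 (contradiction).} The proper coarsening \(v^\U\) of \(u\) is \(\Lring\)-definable in \(K^\U\), with parameters \(\iota(\vec c)\), which contradicts \cref{fact:dt-no-proper-coars-defble} applied to \((K^\U,u)\). Hence \(v\) is not definable, even with parameters.

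The step needing the most care is the identification in Step 1. We must ensure that the same index set and ultrafilter \(\U\) produced by Keisler--Shelah for \(Kv\) can be used for \((K,v)\), and that \(K^\U v^\U\cong (Kv)^\U\) as fields. The second point is standard via the interpretation \(\O/\m\). The first holds because the remark gives a single ultrafilter \(\U\), and we simply form the ultrapower of \((K,v)\) with respect to it. With this in place, the rest is a direct application of the lemma and the fact.
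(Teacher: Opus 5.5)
Your proposal is correct and follows the paper's proof essentially step for step. You pass to an ultrapower $(K^\U,v^\U)$ whose residue field carries a nontrivial tame valuation with divisible value group, use \cref{lem:dtame-decompose} to get a tame refinement of $v^\U$ with divisible value group, and use \Losthm{} to transfer the defining formula, which contradicts \cref{fact:dt-no-proper-coars-defble}; your extra remarks (identifying $K^\U v^\U$ with $(Kv)^\U$, and transferring tameness and divisibility by axiomatizability) only make explicit what the paper leaves implicit.
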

\begin{proof}
	Let \(\U\) be an ultrafilter on some index set \(I\) such that \((Kv)^\U\) admits a nontrivial tame valuation \(u\) with divisible value group.
	We consider the ultrapower \((K^\U,v^\U)\coloneqq (K,v)^\U\) of the valued field \((K,v)\). 
	Then, the composition \(w\coloneqq u\circ v^\U\) is tame with divisible value group as \(u\) and \(v^\U\) are both tame with divisible value group.
	
	Now assume for a contradiction that \(v\) on \(K\) is definable, say \(\O_v=\varphi(K,\vec{c})\) for some \(\Lring\)-formula \(\varphi\) and some parameter tuple \(\vec{c}\in K^n\).
	Then, by \Losthm{}, \(\O_{v^\U}=\varphi(K^\U,\vec{c})\), so \(v^\U\) is definable in \(K^\U\).
	This contradicts \cref{fact:dt-no-proper-coars-defble}, as \(v^\U\) is a proper coarsening of the tame valuation \(w\) with divisible value group.
\end{proof}

We now give examples of perfectoid fields $(K,v)$, both in mixed and in positive characteristic, where the valuation $v$ is not $\Lring$-definable, applying Proposition
\ref{prop:notdef}.
\begin{examp}
    \label{ex:non-def}
    Let $k$ be a non-algebraically closed field of positive characteristic
    such that $k \equiv k(\!(\mathbb{Q})\!)$. More precisely, we can start with any perfect field
    $l$, e.g., $l=\mathbb{F}_p$, and consider the field $k=l(\!(\bigoplus_{i \in \mathbb{Z}}\mathbb{Q})\!)$. 
    We claim that $k \equiv k(\!(\mathbb{Q})\!)$.
    Let $\nu$ denote the Hahn series valuation on $k$ with residue field 
    $l$ and value group $\bigoplus_{i \in \mathbb{Z}}\mathbb{Q}$, and let 
    $w$ be the composition of the
    Hahn series valuation $v$ on $k(\!(\mathbb{Q})\!)$ and the valuation $\nu$ on $k$.
    Then, both $(k,\nu)$ and $(k(\!(\mathbb{Q})\!),w)$ are tame valued fields of characteristic $p$
    with divisible value group and the same perfect residue field $l$.
    Thus, the Ax-Kochen/Ershov Theorem for tame fields \cite[Theorem 7.1]{kuhlmann2016algebra} implies
    $$(k,\nu) \equiv (k(\!(\mathbb{Q})\!),w)$$
    as valued fields. 
    In particular, we conclude that $k \equiv k(\!(\mathbb{Q})\!)$ holds in the language of rings.

    Then $k$ is $t$-henselian of divisible tame type, and the valued field
    $(k(\!(\mathbb{Q})\!),v)$ is perfectoid. By Proposition \ref{prop:notdef}, $v$ is not definable.
    Moreover, if $(K,u)$ is any untilt of $(k(\!(\mathbb{Q})\!),v)$, then $u$ is also
    not definable by Proposition \ref{prop:notdef}.
\end{examp}

While the example above provided an explicit example of a perfectoid field of positive 
characteristic where the valuation was not $\Lring$-definable (even with parameters), it was less explicit in
mixed characteristic. Thus, we now also give an explicit construction in mixed characteristic.

\begin{examp} Take once again a non-algebraically closed 
field $k$ of positive characteristic with $k \equiv k(\!(\mathbb{Q})\!)$.
     An explicit example of a tame field
    of mixed characteristic with divisible value group and residue field $k$ can be constructed
    as follows: start with the ring of Witt vectors $W[k(\!(\mathbb{Q})\!)]$, take its
    fraction field $K_0 = W[k(\!(\mathbb{Q})\!)][1/p]$, 
    and let $v_0$ 
    be the Witt vector valuation on $K_0$. Let $(K_1,v_1)$ be a maximal totally ramified algebraic extension of $(K_0,v_0)$, 
    and let $v^\mathrm{alg}$ be the unique extension of $v_1$ to a fixed algebraic closure $K^\mathrm{alg}$.
    Let $R$ denote the ramification subgroup corresponding to $v^\mathrm{alg}/v_1$ of
    the absolute Galois group $\mathrm{Gal}(K_1^\mathrm{alg}/K_1)$ 
    (see \cite[Theorem 5.2.7]{engler2005valued}). 
    Let $C$ be any group theoretic complement of $R$ in $\mathrm{Gal}(K_1^\mathrm{alg}/K_1)$ (such exist by \cite[Theorem 2.2]{KPR}), and let $K$ be a fixed field of $C$. 
    Writing $v$ for the
    restriction of $v^\mathrm{alg}$ to $K$, we get that $(K,v)$ is a tame field of mixed characteristic 
    with value group $\mathbb{Q}$
    and residue field $k$ \cite[Proposition 4.5]{KPR}. Its completion
    $\widehat{(K,v)}$ is a perfectoid field of mixed characteristic such that $v$
    is not $\Lring$-definable.
\end{examp}

\subsection*{Main Theorem}
We are now in a position to prove our main theorem:
\begin{thm}
	\label{main-thm}
	Let \((K,v)\) be perfectoid. Then \(v\) is definable if and only if
    $v$ is $\emptyset$-definable if and only if at least one of the following holds
	\begin{enumerate}
		\item \((K,v)\) is not defectless, 
		\item \(vK\) is not divisible, or
		\item \(Kv\) is not t-henselian of divisible-tame type.
	\end{enumerate}
\end{thm}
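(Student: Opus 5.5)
We close the cycle of implications: $\emptyset$-definable $\Rightarrow$ definable $\Rightarrow$ at least one of (1)--(3) $\Rightarrow$ $\emptyset$-definable. The first implication is trivial. For the second, we argue by contraposition: if none of (1)--(3) holds, then $v$ is not definable. For the third, we treat each of the conditions (1)--(3) separately, using the three definability results above.

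\emph{Non-definability.} Suppose $(K,v)$ is defectless, $vK$ is divisible and $Kv$ is t-henselian of divisible-tame type. We first check that $(K,v)$ is tame. Henselianity holds since $(K,v)$ is complete of rank $1$. Defectlessness is assumed, and $p$-divisibility of $vK$ follows from divisibility. The remaining point is perfectness of $Kv$. Since $\mathcal{O}_v/p\mathcal{O}_v$ is semi-perfect, every $\bar a\in Kv$ lifts to some $a\in\mathcal{O}_v$, and there is $b\in\mathcal{O}_v$ with $b^p\equiv a \bmod p\mathcal{O}_v$. As $v(p)>0$ (in mixed characteristic; in characteristic $p$ the quotient is $\mathcal{O}_v$ itself), reducing modulo $\mathfrak{m}_v$ gives $\bar b^p=\bar a$, so Frobenius on $Kv$ is surjective. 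Thus $(K,v)$ is tame with divisible value group and $Kv$ is t-henselian of divisible-tame type. \cref{prop:notdef} then shows that $v$ is not definable, even with parameters.

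\emph{Definability.} If (1) holds, \cref{defect-def} applies directly: a perfectoid field is henselian with $vK\leq\mathbb{R}$, it satisfies (i) or (ii) there (in characteristic $p$, semi-perfectness of $\mathcal{O}_v$ means $\mathcal{O}_v$, and hence $K$, is perfect), and the theorem gives $\emptyset$-definability. If (2) holds, there is a prime $q$ with $vK$ not $q$-divisible. Since $vK\leq\mathbb{R}$ and $vK$ is non-discrete, \cref{hong}(ii) makes $v$ $\emptyset$-definable via $\varphi_q$. If (3) holds, we may assume (1) and (2) fail, since otherwise we are already done. Then $(K,v)$ is defectless with divisible value group, and by the perfectness argument above it is tame. \cref{prop:defnotdtt} now yields $\emptyset$-definability.

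The only step that is not a direct citation is verifying tameness, specifically that $Kv$ is perfect, and this follows from semi-perfectness as shown. One point needs care: in case (3) we must reduce to the situation where (1) and (2) fail in order to invoke \cref{prop:defnotdtt}, and the case split above handles this.
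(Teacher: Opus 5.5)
Your proposal is correct and follows the paper's proof essentially step for step. Conditions (1)--(3) each give $\emptyset$-definability via \cref{defect-def}, \cref{hong} and \cref{prop:defnotdtt}, and the converse is \cref{prop:notdef} once tameness is checked. Your explicit check that $Kv$ is perfect, your reduction in case (3) to the situation where (1) and (2) fail, and your citation of \cref{defect-def} (rather than \cref{def-from-defect}, which only gives definability with parameters) just make precise what the paper's proof leaves implicit.
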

\begin{proof}
    If \((K,v)\) is not defectless, then \(v\) is $\emptyset$-definable by \cref{def-from-defect}.
    If \(vK\) is not divisible, then \(v\) is $\emptyset$-definable by \cref{hong}.
    If \(Kv\) is not t-henselian of divisible-tame type, then \(v\) is 
    $\emptyset$-definable by \cref{prop:defnotdtt}.

    Conversely, assume that \(\neg (1)\) and \(\neg (2)\) and \(\neg (3)\) hold.
    Since \((K,v)\) is perfectoid, we know that it is henselian and \(Kv\) is perfect. 
    Together with \(\neg (1)\), \((K,v)\) is defectless, and \(\neg (2)\), \(vK\) is divisible, we get that \((K,v)\) is tame with divisible value group.
    Now, by \(\neg (3)\), \(Kv\) is t-henselian of divisible-tame type. 
    Thus, by \cref{prop:notdef}, \(v\) cannot be definable (even with parameters).
\end{proof}

Recall that, by \cref{fact:perfectoid}, a perfectoid field \((K,v)\) and its tilt \((K^\flat,v^\flat)\) have the same residue field and value group, and one of them is defectless if and only if the other is.
Thus we get the following corollary.

\begin{kor}
	\label{kor:v-def--vflat-def}
	Let \((K,v)\) be a perfectoid field of mixed characteristic. 
	Then, \(v\) is $(\emptyset$-$)$definable if and only if \(v^\flat\) is $(\emptyset$-$)$definable.
\end{kor}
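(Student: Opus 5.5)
The plan is to deduce the corollary directly from \cref{main-thm}, applied once to \((K,v)\) and once to its tilt \((K^\flat,v^\flat)\). First we check that \((K^\flat,v^\flat)\) is itself perfectoid. This holds by \cref{fact:perfectoid}(2), or directly from the construction: \(\O_{v^\flat}\) is a rank-1 complete valuation ring with value group \(vK\), which is non-discrete, and \(K^\flat\) is perfect of characteristic \(p\). Hence \cref{main-thm} applies to both fields. For each of them it gives: definable \(\iff\) \(\emptyset\)-definable \(\iff\) at least one of conditions (1)–(3) holds. In particular the parenthetical ``(\(\emptyset\)-)'' in the statement costs nothing extra. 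It therefore suffices to show that each of conditions (1), (2) and (3) holds for \((K,v)\) if and only if it holds for \((K^\flat,v^\flat)\).

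We take the three conditions in turn.
\begin{itemize}
\item Condition (1) (failure of defectlessness) transfers by \cref{fact:perfectoid}(3): \((K,v)\) is defectless if and only if \((K^\flat,v^\flat)\) is.
\item Condition (2) transfers because \(vK\cong v^\flat K^\flat\) as ordered groups by \cref{fact:perfectoid}(2)(a). Divisibility is a property of the abstract group, so it is preserved.
\item Condition (3) transfers because \(Kv\cong K^\flat v^\flat\) as fields by \cref{fact:perfectoid}(2)(c). Being t-henselian of divisible-tame type, as in \cref{def:ttame}, depends only on the \(\Lring\)-elementary theory of the field, so it is certainly invariant under field isomorphism.
\end{itemize}

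Combining these, the disjunction (1) \(\vee\) (2) \(\vee\) (3) holds for \((K,v)\) exactly when it holds for \((K^\flat,v^\flat)\). By \cref{main-thm} applied on both sides, \(v\) is definable, equivalently \(\emptyset\)-definable, if and only if \(v^\flat\) is.

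There is no real obstacle here. The only points needing care are confirming that the tilt falls under the hypotheses of \cref{main-thm}, and noting that the identifications in \cref{fact:perfectoid} are isomorphisms of ordered groups and of fields, not merely bijections. Both are standard and already recorded in the paper.
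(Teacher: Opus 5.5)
Your proposal is correct and matches the paper's argument: apply \cref{main-thm} to both $(K,v)$ and $(K^\flat,v^\flat)$, and use \cref{fact:perfectoid} to see that each of conditions (1)--(3) holds for one exactly when it holds for the other. Your explicit check that the tilt is itself perfectoid is a point the paper leaves implicit, but it is the same approach.
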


On closer inspection one can see that none of 
\cref{hong}, \cref{prop:defnotdtt}, \cref{thm:defect} or \cref{prop:notdef} required the 
field $(K,v)$ to be complete. 
In each case, the assumptions applied to any rank-1 henselian valued field $(K,v)$ of 
residue characteristic $p>0$ such that $K$ is perfect (if $\chara{K}=p)$ or $\O_v/p$ is
semi-perfect (otherwise). However, by \cite[Theorem 5.1.4]{jahnke-kartas2023beyond}, any
such field $(K,v)$ is an $\Lval$-elementary substructure of its completion 
$\widehat{({K},{v})}=(\widehat{K},\widehat{v})$, and the latter is a perfectoid field. Thus, the $\emptyset$-definability 
(respectively definability) of $v$ and $\widehat{v}$ are in fact equivalent.

\section{Producing and avoiding uniformity}
One might hope to ask whether one can use the same formula when defining $v$ and $v^\flat$.
In the explicit definition we have encountered when defining a valuation using a non-divisibility in the value group (see \cref{hong}), in a natural way the same 
formulae defined $v$ and $v^\flat$. However, due to the fact that characteristic $p$ and characteristic $0$ \label{uniform}
can be differentiated by an $\mathcal{L}_\mathrm{ring}$-formula, we can always artificially 
create or dispel uniformity, as illustrated in the next proposition:
\begin{prop}
	Let \((K,v)\) be perfectoid of mixed characteristic.
	Assume that \(v\) is definable.
	Then,
	\begin{enumerate}
		\item there is an \(\Lring\)-formula \(\varphi(x,\vec{y})\) such that \(\O_v =\varphi(K,\vec{c})\) for some parameter tuple \(\vec{c}\in K^n\) but for any choice of \(\vec{c}'\in (K^\flat)^n\) we have \(\O_{v^\flat}\neq \varphi(K^\flat,\vec{c}')\);
		\item there is an \(\Lring\)-formula \(\varphi(x,\vec{y})\) such that \(\O_{v^\flat} =\varphi(K^\flat,\vec{c}')\) for some parameter tuple \(\vec{c}'\in (K^\flat)^n\) but for any choice of \(\vec{c}\in K^n\) we have \(\O_v\neq \varphi(K,\vec{c})\);
		\item there is an \(\Lring\)-formula \(\varphi(x,\vec{y})\) such that there are parameter tuples \(\vec{c}\in K^n\) and \(\vec{c}\in (K^\flat)^n\) with \(\O_v=\varphi(K,\vec{c})\) and \(\O_{v^\flat}=\varphi(K^\flat,\vec{c}')\).
	\end{enumerate}
\end{prop}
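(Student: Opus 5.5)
We exploit that the characteristic of a field is $\Lring$-detectable by a parameter-free sentence. Write $\chi_p$ for the sentence $p\cdot 1 = 0$. Then $K\models\neg\chi_p$ and $K^\flat\models\chi_p$. By \cref{kor:v-def--vflat-def}, since $v$ is definable, both $v$ and $v^\flat$ are $\emptyset$-definable. Fix parameter-free $\Lring$-formulae $\alpha(x)$ and $\beta(x)$ with $\O_v=\alpha(K)$ and $\O_{v^\flat}=\beta(K^\flat)$. All three items will be obtained by gluing $\alpha$, $\beta$ and a trivially wrong formula along the case distinction $\chi_p$ versus $\neg\chi_p$. Dummy parameter variables $\vec{y}$ can be added as needed; they do not occur, so the choice of parameters is irrelevant.

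For (3), we take
\[\varphi(x,\vec{y})\equiv (\neg\chi_p\wedge\alpha(x))\vee(\chi_p\wedge\beta(x)).\]
In $K$ this is equivalent to $\alpha(x)$, and in $K^\flat$ to $\beta(x)$. Hence it defines $\O_v$ and $\O_{v^\flat}$ respectively, for any choice of parameters.

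For (1), we take
\[\varphi(x,\vec{y})\equiv (\neg\chi_p\wedge\alpha(x))\vee(\chi_p\wedge x=x).\]
In $K$ this defines $\O_v$. In $K^\flat$ it defines all of $K^\flat$, whatever $\vec{c}'$ is. This differs from $\O_{v^\flat}$ because $v^\flat$ is nontrivial: its value group is $vK\neq 0$, by \cref{fact:perfectoid}, since perfectoid fields are non-discretely valued of rank $1$.

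For (2), we symmetrically take
\[\varphi(x,\vec{y})\equiv (\chi_p\wedge\beta(x))\vee(\neg\chi_p\wedge x=x).\]
It defines $\O_{v^\flat}$ in $K^\flat$ and $K\neq\O_v$ in $K$.

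There is no real obstacle. The only points to check are that both valuations are nontrivial, so that the whole field is not the valuation ring, and that $\emptyset$-definability on both sides is available from \cref{kor:v-def--vflat-def}. Alternatively, (1) and (2) only need definability with parameters: one could use $\alpha(x,\vec{y})$ with parameters from $K$ and ignore the other side entirely, because the opposite branch is forced to be trivial. The $\emptyset$-definable version is only needed for (3), and even there parameters could be concatenated as $\vec{y}=(\vec{y}_1,\vec{y}_2)$ with arbitrary fillers.
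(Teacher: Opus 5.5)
Your proof is correct and is essentially the paper's argument: you glue the two definitions along the parameter-free sentence $\chi_p$, which separates characteristic $0$ from characteristic $p$. The differences are cosmetic: for (1) and (2) the paper uses $\neg\chi_p\wedge\psi$ (resp.\ $\chi_p\wedge\psi$), which defines the empty set on the other side and so needs no nontriviality check, and it works with definitions with parameters (padding the tuples to equal length) instead of invoking $\emptyset$-definability.
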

\begin{proof}
	Note that by \cref{kor:v-def--vflat-def}, also \(v^\flat\) is definable. Let \(p=\chara(Kv)=\chara(K^\flat)\) and let \(\chi_p:\equiv \underbrace{1+1+\ldots+1}_{p\text{-many}}=0\).
	\begin{enumerate}
		\item Let \(\psi(x,\vec{y})\) be some \(\Lring\)-formula defining \(v\) in \(K\), i.e., there is a parameter tuple \(\vec{c}\in K^n\) such that \(\O_v=\psi(K,\vec{c})\).
		Let \(\varphi(x,\vec{y}):\equiv \neg \chi_p \wedge \psi(x,\vec{y})\). 
		Since \(K\) has characteristic zero, \(\neg\chi_p\) holds true in \(K\), so \(\O_v=\psi(K,\vec{c})=\varphi(K,\vec{c})\).
		
		On the other hand, as \(K^\flat\) has characteristic \(p\), \(\neg\chi_p\) is false in \(K^\flat\). 
		Thus, for any parameter tuple \(\vec{c}'\in (K^\flat)^n\), we get 
		\(\O_{v^\flat}\neq \emptyset=\varphi(K^\flat,\vec{c}')\).
		\item Similar to (1), use \(\chi_p\) instead of \(\neg\chi_p\).
		\item Let \(\psi_1(x,\vec{y})\) be some \(\Lring\)-formula defining \(v\) in \(K\), i.e., there is a parameter tuple \(\vec{c}\in K^n\) such that \(\O_v=\psi_1(K,\vec{c})\).
		Let \(\psi_2(x,\vec{y})\) be some \(\Lring\)-formula defining \(v^\flat\) in \(K^\flat\), i.e., there is a parameter tuple \(\vec{c}'\in (K^\flat)^n\) such that \(\O_{v^\flat}=\psi_2(K^\flat,\vec{c}')\).
		We may assume that both parameter tuples have the same length as we can always extend it by unused parameters.
		Then,
		\[
		\varphi(x,\vec{y}):\equiv (\neg\chi_p \wedge \psi_1(x,\vec{y})) \vee (\chi_p \wedge \psi_2(x,\vec{y}))
		\]
		defines both \(v\) in \(K\) and \(v^\flat\) in \(K^\flat\). Indeed, as \(\neg\chi_p\) holds in \(K\), we have \(\O_v=\psi_1(K,\vec{c})=\varphi(K,\vec{c})\), and as \(\chi_p\) holds in \(K^\flat\), we have \(\O_{v^\flat}=\psi_2(K^\flat,\vec{c}')=\varphi(K^\flat,\vec{c}')\).\qedhere
	\end{enumerate}
\end{proof}

\section{Quantifier complexity}
The previous section illustrates that asking for a uniform definition for a perfectoid
valuation and its tilt has no merit. \label{sec:qf}
In this section, we focus on the quantifier
complexity of such definitions instead. We show that $\exists$-$\emptyset$-definability
and  $\forall$-$\emptyset$-definability both untilt, and that
$\forall$-$\emptyset$-definability does not tilt. The question whether 
$\exists$-$\emptyset$-definability tilts remains open.

Our approach relies on work by Anscombe and Fehm \cite{anscombe-fehm2017characterizing}, and 
we begin by recalling their definitions. They phrase all their definitions and statements in terms of
$C$-fields. Here, we only consider the special case where $C=\mathbb{Z}.$

\begin{defi}[{see \cite[Definition 3.5 and Lemma 3.7]{anscombe-fehm2017characterizing}}]
   Let $K$ be a field.  \label{embres}
     \begin{enumerate}
\item We say that $K$ has embedded residue if there is some
    $K' \equiv K$ (in $\Lring$) and a nontrivial valuation $v$ on $K'$ such that there is an embedding of $Kv$ into $K'$.
\item We say that $K$ is $\mathbb{Z}$-large if there is some $K' \equiv K$ (in $\Lring$),
a subfield $E\subseteq K'$ and a nontrivial henselian valuation $v$ on $E$
such that $Ev$ is isomorphic to $K'$.
     \end{enumerate}
\end{defi}

In fact, by \cite[Lemma 3.3]{anscombe2026noteexistentiallythenselianfields}, a field $K$ is $\mathbb{Z}$-large if and only if it has the same
existential $\Lring$-theory as some field admitting a nontrivial henselian
valuation. Any large field in the sense of Pop is $\mathbb{Z}$-large.
Anscombe and Fehm show that not having embedded residue (respectively not being $\mathbb{Z}$-large) can be used to characterize which henselian valuations are 
$\exists$-$\emptyset$-definable (respectively $\forall$-$\emptyset$-definable):

\begin{fact}[{\cite[Corollary 5.3]{anscombe-fehm2017characterizing}}]
For any field $F$, the following are equivalent. \label{AF}
\begin{enumerate}
\item[$(0^{\exists})$] The valuation ring is $\exists$-$\emptyset$-definable in \emph{some} nontrivially henselian valued 
field $(K,v)$ with $\chara{K}=\chara{Kv}$ and $Kv\equiv_{\Lring} F$.
\item[$(1^{\exists})$] There is a $\exists$-$\emptyset$-$\Lring$-formula 
$\psi_F(x)$ that defines 
the valuation ring in any henselian valued field $(K,v)$ with $Kv\equiv_{\Lring} F$.
\item[$(2^{\exists})$] $F$ does not have embedded residue.
\end{enumerate}
Also, the following are equivalent.
\begin{enumerate}
\item[$(0^{\forall})$] The valuation ring is $\forall$-$\emptyset$-definable in 
\emph{some} nontrivially henselian valued 
field $(K,v)$ with $\chara{K}=\chara{Kv}$ and $Kv\equiv_{\Lring} F$.
\item[$(1^{\forall})$]
There is a $\forall$-$\emptyset$-$\Lring$-formula 
$\psi_F(x)$ that defines 
the valuation ring in any henselian valued field $(K,v)$ with $Kv\equiv_{\Lring} F$.
\item[$(2^{\forall})$] $F$ is not $\mathbb{Z}$-large. 
\end{enumerate}
\end{fact}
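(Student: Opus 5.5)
The plan is to go around the cycle $(1)\Rightarrow(0)\Rightarrow(2)\Rightarrow(1)$ separately for the existential and the universal case. The two cases are dual: a $\forall$-$\emptyset$-definition of $\O_v$ is the same as an $\exists$-$\emptyset$-definition of $K\setminus\O_v$. So I will set things up so that embedded residue controls embeddings in one direction and $\mathbb{Z}$-largeness controls them in the other.

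The implication $(1)\Rightarrow(0)$ is immediate. Take $K=F(\!(t)\!)$ with the $t$-adic valuation. It is nontrivially henselian, has $\chara K=\chara F$, and its residue field is $F$. The uniform formula $\psi_F$ from $(1)$ then witnesses $(0)$.

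For $(0)\Rightarrow(2)$ I argue by contraposition, using that existential formulas are preserved upward along ring embeddings. Assume $F$ has embedded residue, and suppose $\psi(x)$ is an $\exists$-$\emptyset$-formula defining $\O_v$ in a nontrivially henselian $(K,v)$ of equal characteristic with $Kv\equiv F$. Since $\psi$ is $\emptyset$-definable, $\psi$ also defines a henselian valuation ring in every $K^*\equiv K$ (see \cref{subsec:def}). Hence for every ring embedding $\iota\colon K\to K^*$ we get $\iota(\O_v)\subseteq \O_{v^*}$. Using the witness $(K',w)$ of embedded residue together with Keisler--Shelah (\cref{thm:Keisler-Shelah}), I pass to suitable ultrapowers. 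In equal characteristic the Ax--Kochen/Ershov-type embedding results give that $(K,v)$ embeds, up to elementary equivalence, into a field whose $\psi$-defined valuation sends some element of negative value into the valuation ring. Concretely, this is done by placing the residue field inside the field via the embedding and composing valuations. This contradicts $\iota(\O_v)\subseteq\O_{v^*}$. The universal case is symmetric: $\mathbb{Z}$-largeness supplies a subfield $E\subseteq K'$ with henselian residue field $\cong K'$, and this produces an embedding under which some unit of $\O_v$ leaves the valuation ring. That contradicts preservation of the existential complement.

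The step I expect to be the main obstacle is $(2)\Rightarrow(1)$. I would use a relative Łoś--Tarski preservation theorem. Let $T$ be the $\Lval$-theory of henselian valued fields with residue field $\equiv F$. By compactness, an $\exists$-$\emptyset$-$\Lring$-formula defining $\O$ uniformly in models of $T$ exists as soon as the following holds: for all models $(K,v),(L,w)\models T$ and every ring embedding $\iota\colon K\to L$, we have $\iota(\O_v)\subseteq\O_w$. To verify this, suppose $\iota(\O_v)\not\subseteq\O_w$. Then the restriction $w\circ\iota$ is a valuation on $K$ not finer than $v$. Using the henselianity of $v$ and comparing $w\circ\iota$ with $v$ (for instance via their common coarsening), I would extract a nontrivial valuation on a field elementarily equivalent to $F$ whose residue field embeds back into that field. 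This says exactly that $F$ has embedded residue. The dual statement, that no such $\iota$ can map a non-unit inside, should similarly yield a henselian subfield with residue field $\equiv F$, i.e.\ $\mathbb{Z}$-largeness. The delicate points are managing the elementary equivalences and keeping track of characteristics, and this is where I expect most of the work, following \cite{anscombe-fehm2017characterizing}.
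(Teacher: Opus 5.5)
There is a genuine gap. $(1)\Rightarrow(0)$ via $F(\!(t)\!)$ is fine, and the relative \polishL{}o\'s--Tarski criterion you state for $(2)\Rightarrow(1)$ is correct. The problem is that the obstruction points the wrong way throughout, so none of the contradictions you state is actually a contradiction.

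An existential $\psi$ yields $\iota(\O_v)\subseteq\O_{v^*}$, i.e.\ the pulled-back valuation $v^*\circ\iota$ must be a \emph{coarsening} of $v$. An element of negative $v$-value landing in $\O_{v^*}$ only says that $v^*\circ\iota$ is a proper coarsening, which is compatible with this. What you need is a $v$-unit \emph{leaving} $\O_{v^*}$, i.e.\ $v^*\circ\iota$ a proper refinement. This is exactly what embedded residue produces: composing $v$ with a nontrivial valuation $\bar u$ on (an ultrapower of) $Kv$ whose residue field embeds back gives the refinement $\bar u\circ v$ as pulled-back valuation.

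Dually, $\mathbb{Z}$-largeness lets you put $K$ inside the residue field of a henselian field, so the pulled-back valuation is trivial. It is then elements of negative value \emph{entering} the ring that contradict an existential definition of $K\setminus\O_v$; a unit leaving contradicts nothing.

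The same reversal sits in $(2)\Rightarrow(1)$. From $\iota(\O_v)\not\subseteq\O_w$ you get that $w\circ\iota$ is \emph{not coarser} than $v$, not ``not finer''. The proper-refinement case gives embedded residue after Keisler--Shelah. Analysing coarsenings, as your wording suggests, leads to $\mathbb{Z}$-largeness, which is the wrong condition.

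The incomparable case is not settled by ``comparing via the common coarsening'' alone; you need an extra input such as F.~K.~Schmidt's theorem. In the henselization of $(K,w\circ\iota)$, $v$ extends uniquely to a henselian valuation incomparable with the henselization of $w\circ\iota$. So the residue field of their finest common coarsening is separably closed. Hence $K(w\circ\iota)$ is separably closed, $Lw$ contains the algebraic closure of the prime field, and this gives embedded residue.

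Second, in $(0)\Rightarrow(2)$ you invoke ``Ax--Kochen/Ershov-type embedding results in equal characteristic'' to land in some $K^*\equiv K$. No such principle is available in equal characteristic $p$, so you cannot build a field elementarily equivalent to $K$ from residue-field data. The hypothesis $\chara K=\chara Kv$ in $(0)$ is there for a different tool: the Anscombe--Fehm theorem that, for nontrivially henselian equicharacteristic valued fields, the existential $\Lval$-theory is determined by, and monotone in, the existential $\Lring$-theory of the residue field.

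With that tool you never need $K^*\equiv K$:
\begin{enumerate}
\item Take the source to be $(K,v)^\U$, with $\U$ chosen by Keisler--Shelah so that $(Kv)^\U$ carries a nontrivial $\bar u$ whose residue field embeds into $(Kv)^\U$.
\item Take the target to be the henselization $(L,w)$ of $(K^\U,\bar u\circ v^\U)$.
\item Only the universal $\Lval$-sentence $\forall x\,(\psi(x)\to x\in\O)$ has to hold in $(L,w)$. It transfers from $(K,v)$ because $Lw$ embeds into a model of $\Th(F)$.
\item Then $\O_{v^\U}\subseteq\psi(K^\U)\subseteq\psi(L)\cap K^\U\subseteq\O_w\cap K^\U=\O_{\bar u\circ v^\U}$, which is absurd since $\bar u$ is nontrivial.
\end{enumerate}
The universal case is analogous. $\mathbb{Z}$-largeness plus the same transfer embeds $K$ into some $F^*\equiv F$, and you use the target $F^*(\!(t)\!)$.
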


As an immediate consequence, we get that $\exists$-$\emptyset$-definability and
$\forall$-$\emptyset$-definability untilt:
\begin{kor} \label{untilt}
    Let $(K,v)$ be perfectoid with tilt $(K^\flat,v^\flat)$. If $v^\flat$ is
    definable by an $\exists$-$\emptyset$-formula (respectively
$\forall$-$\emptyset$-formula) in $\Lring$, then so is $v$.
\end{kor}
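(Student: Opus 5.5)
We apply \cref{AF} with $F:=Kv$, using that $Kv\cong K^\flat v^\flat$ by \cref{fact:perfectoid}. We treat the existential case; the universal case is identical, with $(0^{\forall})$, $(1^{\forall})$ and $(2^{\forall})$ in place of their existential counterparts. If $\chara K=p$, then $(K,v)\cong(K^\flat,v^\flat)$ by \cref{fact:perfectoid}, and there is nothing to prove. So assume $\chara K=0$.

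Suppose $v^\flat$ is $\exists$-$\emptyset$-definable in $K^\flat$. We first check that $(K^\flat,v^\flat)$ witnesses condition $(0^{\exists})$ for $F=Kv$. It is nontrivially henselian, since it is perfectoid: complete of rank $1$ and non-discrete, hence in particular nontrivial. It has equal characteristic, since $\chara K^\flat=p=\chara K^\flat v^\flat$. Its residue field is $K^\flat v^\flat\cong Kv$, so certainly $K^\flat v^\flat\equiv_{\Lring} Kv$. Hence $(0^{\exists})$ holds for $F=Kv$.

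By the equivalence $(0^{\exists})\Leftrightarrow(1^{\exists})$ in \cref{AF}, there is then a parameter-free existential $\Lring$-formula $\psi_{Kv}(x)$ that defines the valuation ring in \emph{every} henselian valued field whose residue field is elementarily equivalent to $Kv$. Crucially, $(1^{\exists})$ places no restriction on the characteristic of the field itself, so it applies to $(K,v)$, which is henselian (perfectoid) of mixed characteristic with residue field $Kv$. Therefore $\psi_{Kv}(K)=\O_v$, and $v$ is $\exists$-$\emptyset$-definable.

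The only step needing care is that \cref{AF} is genuinely uniform across characteristics. Condition $(0)$ asks only for an equicharacteristic witness, which is exactly what the tilt supplies, while condition $(1)$ covers all henselian valued fields with residue field elementarily equivalent to $F$, including those of mixed characteristic. This asymmetry is also why the argument does not run in the tilting direction: the untilt $(K,v)$ has mixed characteristic and so cannot serve as a witness for $(0)$.
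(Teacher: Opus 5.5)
Your proposal is correct and follows the paper's proof: the paper likewise obtains the corollary directly from $(0^{\exists})\Rightarrow(1^{\exists})$ (respectively $(0^{\forall})\Rightarrow(1^{\forall})$) in \cref{AF}, using $Kv\cong K^\flat v^\flat$, with the tilt serving as the equicharacteristic witness. Your extra checks (nontriviality, equal characteristic of the tilt, and the trivial case $\chara K=p$) only make explicit what the paper leaves implicit.
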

\begin{proof}
    As $Kv=K^\flat v^\flat$, this follows from \cref{AF} $(0^{\exists}) \implies (1^{\exists})$
    (respectively $(0^{\forall}) \implies (1^{\forall})$).
\end{proof}

For $\forall$-$\emptyset$-definability, we can use \cref{hong} to show that it does
not tilt.

\begin{examp} \label{nottilt}
Let $(K,v)=\widehat{(\mathbb{Q}_p(\mu_{\infty}),v_p)}$. Its residue field is the algebraically closed field $\mathbb{F}_p^{\mathrm{alg}}$. Let $q\neq p$ be a prime.
As $q \nmid v_p(p)$, the valuation $v_p$ is 
$\forall$-$p$-definable by \cref{hong}, and -- as $p$ can be replaced by ${1+\dots +1}$ ($p$ times) -- even $\forall$-$\emptyset$-definable.

However, $\mathbb{F}_p^{\mathrm{alg}}$ is $\mathbb{Z}$-large as it is algebraically closed 
(\cite[Lemma 3.8]{anscombe-fehm2017characterizing}). 
Thus, by \cref{AF} $(0^{\forall}) \implies (2^{\forall})$, 
in $(K^\flat, v^\flat)$ the valuation $v^\flat$ is not $\forall$-$\emptyset$-definable. We conclude that $\forall$-$\emptyset$-definability is not preserved under tilting.
\end{examp}

If we could show that rank-1 non-divisible value groups also give rise
to $\exists$-$t$-definability of the valuation akin to \cref{hong}, we could use a variant of the same argument to see that $\exists$-$\emptyset$-definability
does not tilt. However, the existential analogue of Hong's Theorem no longer holds.


Finding an example that illustrates that $\exists$-$\emptyset$-definability does not tilt
could also give a negative answer to 
\cite[Question 6.25]{anscombe-fehm2017characterizing}.

\section{The classical case}
We now take a closer look at the \emph{classical case}, i.e., when 
\((K,v)\) is perfectoid and \(Kv\) is an algebraic extension of \(\F_p\).
For example, this happens if $(K,v)$ is
obtained as
the completion of an algebraic extension 
of one of $(\mathbb{Q}_p,v_p)$, $(\mathbb{Q},v_p)$, $(\mathbb{F}_p(\!(t)\!),v_t)$ or
$(\mathbb{F}_p(t),v_t)$. \label{sec:classical}

We prove that in this setting, 
the perfectoid valuation is $\emptyset$-definable unless \(K\) is algebraically closed.
Moreover, in this case, we always get an 
$\exists$-$\emptyset$-definition, unless $Kv$ is
algebraically closed.

We start with a simple lemma:
\begin{lem}
\label{alg-Fp+t-hens->alg-closed}
    Let \(F\) be an algebraic extension of \(\F_p\). 
    Then the following are equivalent: 
	\begin{enumerate}
		\item \(F\) is t-henselian of divisible tame type.
		\item \(F\) is t-henselian.
		\item \(F=\F_p\alg\).
	\end{enumerate}
\end{lem}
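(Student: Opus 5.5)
We will prove the cycle (3)$\Rightarrow$(1)$\Rightarrow$(2)$\Rightarrow$(3). The implication (3)$\Rightarrow$(1) is \cref{ex:alg-cl->t-hens-dtt}, since \(\F_p\alg\) is algebraically closed. The implication (1)$\Rightarrow$(2) is immediate from \cref{def:ttame}: a nontrivial tame valuation is in particular a nontrivial henselian valuation, so any \(L\equiv F\) witnessing (1) also witnesses (2). All the content therefore lies in (2)$\Rightarrow$(3).

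For (2)$\Rightarrow$(3), let \(F\subseteq\F_p\alg\) be t-henselian, and take \(L\equiv F\) with a nontrivial henselian valuation \(w\). We argue in three steps.

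\emph{Step 1: \(L\) is algebraic over \(\F_p\).} We want to show that every element of \(L\) is a root of unity or \(0\). Being algebraic over \(\F_p\) is not first-order, but the relevant consequences for each degree are. For each \(n\), let \(N_n\) be the number of elements of \(F\) of degree exactly \(n\) over \(\F_p\); this number is finite. The sentence ``there are exactly \(N_n\) roots of \(X^{p^{n!}}-X\) lying in \(\F_p\)-degree \(n\)'' does not help directly. Instead, we use the fact that all elements of \(F\) are algebraic over \(\F_p\), so the prime-field algebraic closure \(F_0\) of \(L\) is elementarily equivalent-compatible with \(F\). Concretely, \(L\) contains the relative algebraic closure \(F_0\) of \(\F_p\) in \(L\), and \(F_0\cong F\), since for each finite field \(\F_{p^n}\) the sentence ``\(\F_{p^n}\) embeds'' is first-order. \(L\) itself need not be algebraic, so we will not claim that it is; the structure of \(F_0\) is all we use.

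\emph{Step 2: the valuation is trivial on \(F_0\).} Every element of \(F_0^\times\) is a root of unity, hence has value \(0\). Thus \(F_0\) embeds into the residue field \(Lw\).

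\emph{Step 3: transfer the residue field back to \(F\).} This is the main obstacle. The key input we plan to use is the standard fact that if \(L\) carries a nontrivial henselian valuation and is not separably closed, then \(L\) has the same finite Galois-theoretic data as its residue field. In particular, for each \(n\), \(L\) has a separable extension of degree \(n\) if and only if \(Lw\) does, modulo tame/wild contributions from the value group and defect. A cleaner route is available via the Prestel--Ziegler characterization: t-henselianity is elementary, expressed by the sentences saying that every polynomial \(X^{m}+X^{m-1}+a_{m-2}X^{m-2}+\dots+a_0\) with coefficients \(a_i\) in a fixed \(\emptyset\)-definable neighbourhood \(U\) has a root, where \(U\) arises from the field topology.

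Applying these sentences in \(F\) itself, suppose \(F\neq\F_p\alg\), so that \(F\) has a separable extension of some degree \(m\). Since \(F\) is an algebraic extension of \(\F_p\), it is a union of finite fields, and any t-henselian topology is generated by a definable set \(U\) that must contain \(0\) and a nonzero element, since the topology is not discrete. But the topology is Hausdorff and \(F\) is locally finite: we would take \(U\) to be the \(\emptyset\)-definable zero-neighbourhood provided by the Prestel--Ziegler axioms, and choose \(a\in U\setminus\{0\}\) inside a finite subfield. We then need to contradict the sentence asserting that \(X^m+X^{m-1}+a\) has a root for all small \(a\). Here we plan to use the fact that in a locally finite field, the multiplicative order of a nonzero element forces \(a^{k}=1\) for some \(k\), so \(1\in U\cdot U\cdots\), and the neighbourhood axioms then force \(U\) to be all of \(F\). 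Once \(U=F\), every polynomial of that shape has a root, for every \(m\), which means \(F\) has no separable extension of degree \(m\) at all. This contradicts the choice of \(m\), so \(F=\F_p\alg\).

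Making the Prestel--Ziegler neighbourhood argument precise in the locally finite setting is the delicate part. The approach described above is how we would try it first.
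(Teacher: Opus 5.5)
Your implications (3)$\Rightarrow$(1) and (1)$\Rightarrow$(2) are fine. Steps 1 and 2 are also correct: the relative algebraic closure of $\F_p$ in $L$ is isomorphic to $F$, because which $\F_{p^n}$ embed is expressed by sentences. But nothing later uses them. The proof of (2)$\Rightarrow$(3) is meant to happen in Step 3, and there it has a genuine gap.

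The topology you want to exploit lives on $L$, which need not be algebraic over $\F_p$, whereas the trick $a^k=1$ only works in $F$. To move the topology to $F$, you would need the following: the henselian topology of the non-separably closed field $L$ has a basis of zero-neighbourhoods defined by an $\Lring$-formula with parameters, so that the field-topology and V-topology axioms for this family become an $\Lring$-sentence true in $L$, hence in $F$. That is a genuine theorem of Prestel--Ziegler. It is not your claim that t-henselianity is expressed via a fixed $\emptyset$-definable neighbourhood $U$; their Hensel-type condition quantifies over neighbourhoods that depend on the degree, in the language of topological fields.

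Even granting a topology on $F$, the step ``$a^k=1$, so $1\in U\cdot U\cdots$, and the neighbourhood axioms force $U=F$'' is not valid. Continuity of $x\mapsto x^k$ gives a smaller neighbourhood for each fixed $k$, but $k$ is the order of an element chosen after the neighbourhood, so no contradiction arises. Indeed, every infinite field carries non-discrete Hausdorff field topologies, so field-topology axioms alone cannot exclude anything here. What rules out a t-henselian topology on $F$ is the V-condition: such a topology is induced by a nontrivial valuation or absolute value, and these are all trivial on $F$ because $F^\times$ consists of roots of unity. Your appeal to ``the same finite Galois data as the residue field'' is also not a correct general statement, and it does no work.

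The idea you are missing is the one the paper uses, and it avoids topologies altogether. If $F$ is finite, then every $L\equiv F$ is isomorphic to $F$ and has no nontrivial valuation. If $F$ is infinite, then:
\begin{itemize}
\item $F$ is PAC \cite[Corollary~11.2.4]{fried2008field};
\item being PAC is preserved under elementary equivalence \cite[Proposition~11.3.2]{fried2008field};
\item a PAC field carrying a nontrivial henselian valuation is separably closed \cite[Corollary~11.5.5]{fried2008field}.
\end{itemize}
So $L$, hence $F$, is separably closed, and since $F$ is perfect, $F=\F_p\alg$.

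Your topological route can be repaired, but only by importing two results. You would need the Prestel--Ziegler definability of the henselian topology on non-separably closed henselian fields, checking that the V-topology axioms for the defined family transfer to $F$. You would then need the Kowalsky--D\"urbaum/Fleischer theorem that V-topologies come from nontrivial absolute values or valuations, in place of your $U=F$ argument. This is considerably heavier than the PAC argument.
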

\begin{proof}
	Clearly, (1) implies (2).
	
	To prove that (2) implies (3), assume \(F/\F_p\) is algebraic. Then $F$ is either finite or pseudo-algebraically closed (PAC)
    \cite[Corollary 11.2.4]{fried2008field}. If $F$ is finite, so is any $L \equiv F$.
    Since finite fields admit no nontrivial valuations, $L$ cannot admit a nontrivial henselian valuation, so $F$ is not $t$-henselian.
    On the other hand, if $F$ is PAC, so is any $L \equiv F$ \cite[Proposition 11.3.2]{fried2008field}.
	By \cite[Corollary~11.5.5]{fried2008field}, any field which is both PAC and henselian must be separably closed.
	Thus \(L\), and by elementary equivalence also \(F\), is separably closed, so \(F=\F_p\alg\).
	
	For (3) implies (1), see \cref{ex:alg-cl->t-hens-dtt}.
\end{proof}

\begin{prop}
Let $(K,v)$ be perfectoid and assume that $Kv$ is an algebraic extension of 
$\mathbb{F}_p$. If $K$ is not algebraically closed, then $v$ is $\emptyset$-definable.   \label{prop:classical}
\end{prop}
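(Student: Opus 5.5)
We argue by contraposition through \cref{main-thm}: suppose $v$ is not $\emptyset$-definable, and we show that $K$ is algebraically closed. By \cref{main-thm}, all three conditions there fail, so $(K,v)$ is defectless, $vK$ is divisible, and $Kv$ is t-henselian of divisible-tame type. Since $Kv$ is algebraic over $\F_p$, \cref{alg-Fp+t-hens->alg-closed} gives $Kv=\F_p\alg$. As $(K,v)$ is perfectoid, $Kv$ is perfect, so together with defectlessness and divisibility of $vK$ we get that $(K,v)$ is a henselian tame valued field with divisible value group and algebraically closed residue field.

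It remains to show that such a field is algebraically closed. Take a finite extension $L/K$ and let $w$ be the unique prolongation of $v$. Because $(K,v)$ is defectless, $[L:K]=(wL:vK)[Lw:Kv]$. Here $Lw$ is an algebraic extension of $Kv=\F_p\alg$, so $[Lw:Kv]=1$. The value group $wL$ sits inside the divisible hull of $vK$, which is $vK$ itself, so $(wL:vK)=1$. Hence $[L:K]=1$. Since $K$ is perfect (tame fields are perfect; alternatively, perfectoid fields of characteristic $p$ are perfect, and in characteristic $0$ there is nothing to check), every algebraic extension of $K$ is a union of finite ones, and therefore $K$ is algebraically closed.

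This contradicts the hypothesis, so $v$ is $\emptyset$-definable. The argument is essentially bookkeeping once \cref{main-thm} and \cref{alg-Fp+t-hens->alg-closed} are in hand. The only point needing care is the use of the fundamental equality for finite extensions, which requires that defectlessness be stated for all finite extensions, as in the paper's definition, so that no separability assumption is needed.
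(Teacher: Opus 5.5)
Your proof is correct and follows the paper's argument. The paper does the same three-way case split directly (defect, non-divisible $vK$, and $Kv\neq\F_p\alg$ via \cref{prop:defnotdtt} and \cref{alg-Fp+t-hens->alg-closed}) and only cites the fact that a defectless henselian field with divisible value group and algebraically closed residue field is algebraically closed; you instead route the cases through \cref{main-thm} and check that fact yourself via the fundamental equality. Your appeal to perfectness of $K$ at the end is unnecessary, since any element algebraic over $K$ already lies in a finite extension.
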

\begin{proof}
    Recall that 
    a defectless henselian valued field with algebraically closed residue field
    and divisible value group is algebraically closed. Thus, assuming $K$ is not algebraically
    closed, 
    either $(K,v)$ has defect, or $vK$ is no divisible, or $Kv$ is not algebraically 
    closed. In the first two cases, $v$ is $\emptyset$-definable by 
    \cref{def-from-defect}  and
    \cref{hong} respectively. In the third case, we conclude that $v$ is $\emptyset$-definable by combining \cref{prop:defnotdtt} with \cref{alg-Fp+t-hens->alg-closed}. 
\end{proof}

Finally, we not that we get $\exists$-$\emptyset$-definability in the classical
case, unless $Kv$ is algebraically closed. This is shown in \cite[Corollary 6.2]{anscombe-fehm2017characterizing}:
\begin{fact}
        Let $(K,v)$ be henselian and assume that $Kv\neq Kv^\mathrm{alg}$ 
    is an algebraic extension of $\mathbb{F}_p$. Then $v$ is $\exists$-$\emptyset$-definable. \label{fact:classical}
\end{fact}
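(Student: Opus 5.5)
The plan is to reduce this to the characterization of Anscombe and Fehm in \cref{AF}. Put $F:=Kv$. By \cref{AF} $(2^{\exists})\implies(1^{\exists})$, it suffices to show that $F$ does not have embedded residue in the sense of \cref{embres}. If so, there is a parameter-free existential $\Lring$-formula $\psi_F(x)$ defining the valuation ring in every henselian valued field whose residue field is elementarily equivalent to $F$, in particular in $(K,v)$. No characteristic assumption is needed at this point, since $(1^{\exists})$ places no condition on $\chara K$.

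To verify $(2^{\exists})$, I argue by contradiction. Suppose $K'\equiv F$, $w$ is a nontrivial valuation on $K'$, and $K'w$ embeds into $K'$.

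\emph{Step 1: $F$ is PAC.} If $F$ were finite, $K'$ would be finite as well, and a finite field carries no nontrivial valuation, a contradiction. Hence $F$ is infinite and algebraic over $\F_p$, so $F$ is PAC by \cite[Corollary 11.2.4]{fried2008field}. Then $K'$ is PAC by \cite[Proposition 11.3.2]{fried2008field}.

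\emph{Step 2: $K'w$ is algebraically closed.} I invoke the standard fact that the residue field of a PAC field under a nontrivial valuation is algebraically closed; this can be found in Fried--Jarden or Prestel's work on PAC fields. Thus $K'w\supseteq \F_p\alg$. Via the assumed embedding $K'w\hookrightarrow K'$, the field $K'$ then contains a copy of $\F_p\alg$.

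\emph{Step 3: transfer back to $F$.} For each $n$, the statement ``every monic polynomial of degree $n$ over the prime field has a root'' is a single $\Lring$-sentence, because there are only finitely many such polynomials over $\F_p$. This sentence holds in $K'$, which contains $\F_p\alg$. Since $K'\equiv F$, it also holds in $F$. As $F$ is algebraic over $\F_p$, this forces $F=\F_p\alg$, contradicting the hypothesis $Kv\neq Kv\alg$.

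The main obstacle is Step 2. Its correctness is the crux, and if I cannot cite it cleanly I would prove it directly. Let $\bar f$ be monic irreducible over $K'w$. Lift it to a monic $f\in\O_w[X]$ and use that $K'$ is PAC to find a point on a suitable absolutely irreducible curve, for instance $f(x)=a\,y^{m}$ with $v(a)>0$, chosen so that the point lies in $\O_w$ and reduces to a root of $\bar f$.
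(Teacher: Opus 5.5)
Your proof is correct, but it is not the route the paper takes: the paper gives no argument and simply imports the statement from \cite[Corollary~6.2]{anscombe-fehm2017characterizing}. You instead derive it from the general characterization recorded in \cref{AF}. You verify by hand that a non-algebraically closed algebraic extension $F$ of $\F_p$ has no embedded residue, using the same PAC ingredients the paper uses in \cref{alg-Fp+t-hens->alg-closed}: infinite algebraic extensions of $\F_p$ are PAC, and PAC is preserved under $\equiv$. Your route gives a self-contained argument from the paper's own stated facts. It also shows that the hypothesis $Kv\neq Kv\alg$ enters only in Step~3, through the transfer of the sentences ``every monic polynomial of degree $n$ over $\F_p$ has a root''. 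The citation is shorter and does not rely on the reformulation of embedded residue in \cref{embres}, where $Kv$ is a misprint for $K'v$, as you correctly read it.

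One caveat about Step~2: the fallback you sketch would not work as described. The PAC property yields $K'$-rational points on the curve $f(X)=aY^m$, but gives no control over their $w$-adic position, so nothing forces $x\in\O_w$ or $y$ to be integral. You do not need it. Algebraic extensions of PAC fields are PAC (see \cite[Chapter~11]{fried2008field}), so the henselization $((K')^h,w^h)$ is a PAC field carrying a nontrivial henselian valuation. Hence it is separably closed by \cite[Corollary~11.5.5]{fried2008field}. As the henselization is immediate, $K'w$ is the residue field of a nontrivially valued separably closed field $(L,u)$, and such a residue field is algebraically closed. It is separably closed, because a monic lift of a separable monic polynomial is separable, splits in $L$, and has its roots in $\O_u$. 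It is perfect, because for $a\in\O_u$ and $0\neq t\in\m_u$, a root of the separable polynomial $X^p-tX-a$ lies in $\O_u$ and reduces to a $p$-th root of $\res_u(a)$. With this, Step~2 and hence your whole argument are complete.
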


We cannot hope to obtain the same for $\forall$-$\emptyset$-definability, as the 
following example shows. 
This also gives us another example that $\forall$-$\emptyset$-definability does not tilt.
\begin{examp}
    Let $F \neq F^\mathrm{alg}$ be an infinite algebraic extension of $\F_p$. Consider
    the perfectoid field $(F(\!(\mathbb{Z}[1/p])\!),v)$. Then $F$ is large, and so 
    $v$ is not $\forall$-$\emptyset$-definable by 
    \cite[Corollary 6.21]{anscombe-fehm2017characterizing}. However, in every untilt $(K,u)$ of $(F(\!(\mathbb{Z}[1/p])\!),v)$, the valuation $u$ is $\forall$-$p$-definable: in each such untilt, we have 
    $q\nmid u(p)$ for some prime $q \neq p$ (note that no non-zero element of $\mathbb{Z}[1/p]$ is divisible by all primes $q$)
    and so we can apply \cref{hong}. Since $p$ is a $\emptyset$-definable
    constant, $u$ is in fact $\forall$-$\emptyset$-definable.
\end{examp}

\addcontentsline{toc}{chapter}{{Bibliography}}

\bibliographystyle{amsalpha}

\bibliography{bibfile}

@article{anscombe-fehm2017characterizing,
	title={Characterizing diophantine henselian valuation rings and valuation ideals},
	author={Anscombe, Sylvy and Fehm, Arno},
	journal={Proceedings of the London Mathematical Society},
	volume={115},
	number={2},
	pages={293--322},
	year={2017},
	publisher={Wiley Online Library}
}

@article{anscombe2018henselianity,
	title={Henselianity in the language of rings},
	author={Anscombe, Sylvy and Jahnke, Franziska},
	longjournal={Annals of Pure and Applied Logic},
	journal={Ann. Pure Appl. Log.},
	volume={169},
	number={9},
	pages={872--895},
	year={2018},
	publisher={Elsevier}
}

@article{anscombe2024characterizing,
	author = {Anscombe, Sylvy and Jahnke, Franziska},
	title = {Characterizing {NIP} henselian fields},
	journal = {J. Lond. Math. Soc.},
	longjournal = {Journal of the London Mathematical Society},
	volume = {109},
	number = {3},
	pages = {e12868},
	doi = {https://doi.org/10.1112/jlms.12868},
	url = {https://londmathsoc.onlinelibrary.wiley.com/doi/abs/10.1112/jlms.12868},
	eprint = {https://londmathsoc.onlinelibrary.wiley.com/doi/pdf/10.1112/jlms.12868},
	year = {2024}
}

@article{anscombe2026noteexistentiallythenselianfields,
	title={A note on existentially t-henselian fields}, 
	author={Sylvy Anscombe},
	year={2026},
	eprint={2603.27612},
	archivePrefix={arXiv},
	primaryClass={math.LO},
	url={https://arxiv.org/abs/2603.27612}, 
	journal={arXiv preprint \href{https://arxiv.org/abs/2603.27612}{arXiv:2603.27612}},
}

@article{Ax,
    AUTHOR = {Ax, James},
     TITLE = {On the undecidability of power series fields},
   JOURNAL = {Proc. Amer. Math. Soc.},
  FJOURNAL = {Proceedings of the American Mathematical Society},
    VOLUME = {16},
      YEAR = {1965},
     PAGES = {846},
      ISSN = {0002-9939,1088-6826},
   MRCLASS = {02.74 (10.80)},
  MRNUMBER = {177890},
       DOI = {10.2307/2033940},
       URL = {https://doi.org/10.2307/2033940},
}

@incollection{Chatzidakis2026,
author="Chatzidakis, Zo{\'e}",
editor="Haskell, Deirdre",
title="Notes on the Model Theory of Finite and Pseudo-Finite Fields",
bookTitle="Model Theory: Selected Lectures from the 2021 Thematic Program",
year="2026",
publisher="Springer Nature Switzerland",
address="Cham",
pages="1--43",
isbn="978-3-032-15023-3",
doi="10.1007/978-3-032-15023-3_1",
url="https://doi.org/10.1007/978-3-032-15023-3_1"
}

@book{efrat2006valuations,
	title={Valuations, orderings, and {Milnor} {$K$-theory}},
	author={Efrat, Ido},
	number={124},
	year={2006},
	publisher={Amer. Math. Soc.}
}

@book{engler2005valued,
	title={Valued fields},
	author={Engler, Antonio J and Prestel, Alexander},
	year={2005},
	publisher={Springer Science \& Business Media}
}

@book{fried2008field,
  title={Field Arithmetic},
  author={Fried, Michael D. and Jarden, Moshe},
  longjournal={Ergebnisse der Mathematik und ihrer Grenzgebiete},
  journal={Ergeb. Math. Grenzgeb.},
  volume={11},
  edition={3},
  year={2008},
  publisher={Springer}
}

@book{hodges1993model,
	title={Model theory},
	author={Hodges, Wilfrid},
	year={1993},
	publisher={Cambridge University Press}
}

@book{hodges1997shorter,
	title={A shorter model theory},
	author={Hodges, Wilfrid},
	year={1997},
	publisher={Cambridge University Press}
}

@article{hong2014definable,
	title={Definable non-divisible Henselian valuations},
	author={Hong, Jizhan},
	journal={Bull. Lond. Math. Soc.},
	volume = {46},
	number = {1},
	pages = {14-18},
	doi = {https://doi.org/10.1112/blms/bdt074},
	url = {https://londmathsoc.onlinelibrary.wiley.com/doi/abs/10.1112/blms/bdt074},
	eprint = {https://londmathsoc.onlinelibrary.wiley.com/doi/pdf/10.1112/blms/bdt074},
	year = {2014}
}

@article{jahnke-kartas2023beyond,
    AUTHOR = {Jahnke, Franziska and Kartas, Konstantinos},
     TITLE = {Beyond the {F}ontaine-{W}intenberger theorem},
   JOURNAL = {J. Amer. Math. Soc.},
  FJOURNAL = {Journal of the American Mathematical Society},
    VOLUME = {38},
      YEAR = {2025},
    NUMBER = {4},
     PAGES = {997--1047},
      ISSN = {0894-0347,1088-6834},
   MRCLASS = {12L12 (11U09 12J20 13A18 14G45)},
  MRNUMBER = {4930328},
       DOI = {10.1090/jams/1056},
       URL = {https://doi.org/10.1090/jams/1056},
}

@article{JK14a,
author={Jahnke, Franziska and Koenigsmann, Jochen},
title = {Definable henselian valuations},
longjournal = {The Journal of Symbolic Logic},
journal = {J. Symb. Log.},
volume = {80},
issue = {01},
year = {2015},
issn = {1943-5886},
pages = {85--99},
numpages = {15},
doi = {10.1017/jsl.2014.64},
URL = {http://journals.cambridge.org/article_S0022481214000644},
}

@article {andre,
    AUTHOR = {Andr\'e, Yves},
     TITLE = {La conjecture du facteur direct},
   JOURNAL = {Publ. Math. Inst. Hautes \'Etudes Sci.},
  FJOURNAL = {Publications Math\'ematiques. Institut de Hautes \'Etudes
              Scientifiques},
    VOLUME = {127},
      YEAR = {2018},
     PAGES = {71--93},
      ISSN = {0073-8301,1618-1913},
   MRCLASS = {13D22 (13A35 13B40 13D09 18A99)},
  MRNUMBER = {3814651.},
MRREVIEWER = {Marcel\ Morales},
       DOI = {10.1007/s10240-017-0097-9},
       URL = {https://doi.org/10.1007/s10240-017-0097-9},
}

@article{JK15,
	Author = {Franziska Jahnke and Jochen Koenigsmann},
	Doi = {http://dx.doi.org/10.1016/j.apal.2015.03.003},
	Issn = {0168-0072},
	Longjournal = {Annals of Pure and Applied Logic},
	Journal = {Ann. Pure Appl. Log.},
	Number = {7--8},
	Pages = {741 - 754},
	Title = {Uniformly defining $p$-henselian valuations},
	Url = {http://www.sciencedirect.com/science/article/pii/S0168007215000238},
	Volume = {166},
	Year = {2015}}

@article{kartas2024decidability,
	title={Decidability via the tilting correspondence},
	author={Kartas, Konstantinos},
	journal={Algebra Number Theory},
	longjournal={Algebra \& Number Theory},
	volume={18},
	number={2},
	pages={209--248},
	year={2024},
	publisher={Mathematical Sciences Publishers}
}

@article{ketelsen2024definable,
	title={Definable Henselian valuations in positive residue characteristic},
	author={Ketelsen, Margarete and Ramello, Simone and Szewczyk, Piotr},
	journal={to appear in J. Symb. Log.},
	year={published online 2024}
}

@article{Koe95,
	Author = {Koenigsmann, Jochen},
	Fjournal = {Manuscripta Mathematica},
	Journal = {Manuscripta Math.},
	Number = {1},
	Pages = {89--99},
	Title = {{$p$}-{H}enselian fields},
	Volume = {87},
	Year = {1995}}

@article{Koehab,
	AUTHOR = {Koenigsmann, Jochen},
	TITLE = {Elementary characterization of fields by their absolute
	{G}alois group},
	JOURNAL = {Siberian Adv. Math.},
	FJOURNAL = {Siberian Advances in Mathematics},
	VOLUME = {14},
	YEAR = {2004},
	NUMBER = {3},
	PAGES = {16--42},
}

@article{KPR,
	Author = {Kuhlmann, Franz-Viktor and Pank, Matthias and Roquette, Peter},
	Fjournal = {Manuscripta mathematica},
	Journal = {Manuscripta Math.},
	Pages = {39-68},
	Title = {Immediate and Purely Wild Extensions of Valued Fields.},
	Url = {http://eudml.org/doc/155126},
	Volume = {55},
	Year = {1986},
}

@article{kuhlmann2016algebra,
	title={The algebra and model theory of tame valued fields},
	author={Kuhlmann, Franz-Viktor},
	journal = {J. Reine Angew. Math.},
	longjournal = {Journal f{\"u}r die reine und angewandte Mathematik (Crelles Journal)},
	volume={2016},
	number={719},
	pages={1--43},
	year={2016},
	publisher={De Gruyter}
}

@article{kuhlmann-rzepka2023valuation,
	title={The valuation theory of deeply ramified fields and its connection with defect extensions}, 
	author={Franz-Viktor Kuhlmann and Anna Rzepka},
	journal={Trans. Amer. Math. Soc.},
	volume={376},
	number={4},
	pages={2693--2738},
	year={2023},
	publisher={American Mathematical Society},
}

@Article{FontWint,
 Author = {Fontaine, Jean-Marc and Wintenberger, Jean-Pierre},
 Title = {Extensions alg{\'e}\-briques et corps des normes des extensions {APF} des corps locaux},
 FJournal = {Comptes Rendus Hebdomadaires des S{\'e}ances de l'Acad{\'e}mie des Sciences, S{\'e}rie A},
 Journal = {C. R. Acad. Sci., Paris, S{\'e}r. A},
 ISSN = {0366-6034},
 Volume = {288},
 Pages = {441--444},
 Year = {1979},
 zbMATH = {3625519},
 Zbl = {0403.12018}
}

@incollection{FehmJahnke,
    AUTHOR = {Fehm, Arno and Jahnke, Franziska},
     TITLE = {Recent progress on definability of {H}enselian valuations},
 BOOKTITLE = {Ordered algebraic structures and related topics},
    SERIES = {Contemp. Math.},
    VOLUME = {697},
     PAGES = {135--143},
 PUBLISHER = {Amer. Math. Soc., Providence, RI},
      YEAR = {2017},
      ISBN = {978-1-4704-2966-9; 978-1-4704-4222-4},
   MRCLASS = {12J10 (03C60 12L12)},
  MRNUMBER = {3716069.},
MRREVIEWER = {Franz-Viktor\ Kuhlmann},
       DOI = {10.1090/conm/697/14049},
       URL = {https://doi.org/10.1090/conm/697/14049},
}

@article{kartas2026perfectoidcitransfer,
      title={Perfectoid {$C_i$} transfer}, 
      author={Konstantinos Kartas},
      year={2026},
      eprint={2504.14719},
      archivePrefix={arXiv},
      primaryClass={math.NT},
      url={https://arxiv.org/abs/2504.14719}, 
      	journal={arXiv preprint \href{https://arxiv.org/abs/2504.14719}{arXiv:2504.14719}},
}

@article{kuhlmann2025modeltheorytamevalued,
      title={Model theory of tame valued fields and beyond: recent developments and open questions}, 
      author={Franz-Viktor Kuhlmann},
      year={2025},
      eprint={2512.06386},
      archivePrefix={arXiv},
      primaryClass={math.LO},
      url={https://arxiv.org/abs/2512.06386}, 
      journal={arXiv preprint \href{https://arxiv.org/abs/2512.06386}{arXiv:2512.06386}},
}

@article{gambardella2026transferprincipleskatokuzumakiconjecture,
      title={Transfer principles and the {K}ato-{K}uzumaki conjecture}, 
      author={Felipe Gambardella and Konstantinos Kartas},
      year={2026},      
      eprint={2603.01815},
      archivePrefix={arXiv},
      primaryClass={math.NT},
      url={https://arxiv.org/abs/2603.01815}, 
    journal={arXiv preprint \href{https://arxiv.org/abs/2603.01815}{arXiv:2603.01815}},
}

@misc{lurie2018fargues,
	title={The {F}argues-{F}ontaine Curve},
	author={Lurie, Jacob},
	note={Lecture Notes, available at \url{https://www.math.ias.edu/~lurie/205.html}},
	year={2018}
}

@article{Pralg,
	AUTHOR = {Prestel, Alexander},
	TITLE = {Algebraic number fields elementarily determined by their
	absolute {G}alois group},
	JOURNAL = {Israel J. Math.},
	FJOURNAL = {Israel Journal of Mathematics},
	VOLUME = {73},
	YEAR = {1991},
	NUMBER = {2},
	PAGES = {199--205},
}

@article{prestel-ziegler1978model,
	title={Model theoretic methods in the theory of topological fields.},
	author={Prestel, Alexander and Ziegler, Martin},
	year={1978},
	pages = {318--341},
	volume = {1978},
	number = {299-300},
	journal = {J. Reine Angew. Math.},
	longjournal = {Journal für die Reine und Angewandte Mathematik. [Crelle's Journal]},
	doi = {doi:10.1515/crll.1978.299-300.318},
	publisher={Walter de Gruyter, Berlin/New York Berlin, New York}
}

@article{rideaukikuchi2025tiltingequivalencebiinterpretation,
      title={The tilting equivalence as a bi-interpretation}, 
      author={Silvain Rideau-Kikuchi and Thomas Scanlon and Pierre Simon},
      year={2025},
      eprint={2505.01321},
      archivePrefix={arXiv},
      primaryClass={math.LO},
      url={https://arxiv.org/abs/2505.01321}, 
    journal={arXiv preprint \href{https://arxiv.org/abs/2505.01321}{arXiv:2505.01321}},
}

@article{scholze2012perfectoid,
	title={Perfectoid spaces},
	author={Scholze, Peter},
	journal={Publ. math. IH{\'E}S},
	volume={116},
	number={1},
	pages={245--313},
	year={2012},
	publisher={Springer}
}

@article {Wint,
	AUTHOR = {Wintenberger, Jean-Pierre},
	TITLE = {Le corps des normes de certaines extensions infinies de corps
	locaux; applications},
	JOURNAL = {Ann. Sci. \'Ecole Norm. Sup. (4)},
	FJOURNAL = {Annales Scientifiques de l'\'Ecole Normale Sup\'erieure.
	Quatri\`eme S\'erie},
	VOLUME = {16},
	YEAR = {1983},
	NUMBER = {1},
	PAGES = {59--89},
	ISSN = {0012-9593},
	MRCLASS = {11S20 (11S15)},
	MRNUMBER = {719763.},
	MRREVIEWER = {Antonio\ Jos\'e\ Engler},
	URL = {http://www.numdam.org/item?id=ASENS_1983_4_16_1_59_0},
}

\end{document}